\documentclass[11pt]{amsart}
\usepackage{amssymb}
\usepackage{graphicx}                            
\usepackage{hyperref}
\usepackage{setspace}                            

\newtheorem{lemma}{Lemma}[section]
\newtheorem{theorem}{Theorem}[section]


\newcommand{\adfmod}[1]{~(\mathrm{mod}~#1)}
\newcommand{\adfhide}[1]{}
\newcommand{\ADFvfyParStart}[1]{}

\raggedbottom
\sloppy
\parskip 1mm

\begin{document}
\title{A construction for regular-graph designs}
\author{A. D. Forbes}
\address{LSBU Business School,
London South Bank University,
103 Borough Road,
London SE1 0AA, UK.}
\email{anthony.d.forbes@gmail.com}
\author{C. G. Rutherford}
\address{LSBU Business School,
London South Bank University,
103 Borough Road,
London SE1 0AA, UK.}
\email{c.g.rutherford@lsbu.ac.uk}
%
%
\subjclass[2010]{05C51}
\keywords{Regular-graph design}

\begin{abstract}
A regular-graph design is
a block design for which a pair $\{a,b\}$ of distinct points occurs in $\lambda+1$ or $\lambda$
blocks depending on whether $\{a,b\}$ is or is not an edge of a given $\delta$-regular graph.
Our paper describes a specific construction for regular-graph designs with
$\lambda = 1$ and block size $\delta + 1$.
We show that for $\delta \in \{2,3\}$, certain necessary conditions for the existence of such a design with $n$ points
are sufficient, with two exceptions in each case and two possible exceptions when $\delta = 3$.
We also construct designs of orders 105 and 117 for connected 4-regular graphs.
\end{abstract}

\maketitle

\section{Introduction}\label{sec:Introduction}

All graphs in this paper are simple
(undirected, with no loops and no multiple edges).
Let $G$ be a graph.
If $i$ is a vertex, the {\em neighbourhood} of $i$, denoted by $N(i)$, is the set of vertices $j$ for which $\{i,j\}$ is an edge.
We also define $N[i] = \{i\} \cup N(i)$, the {\em closed neighbourhood} of $i$.
The cardinality of $N(i)$ is called the {\em degree} of $i$.
If $|N(i)|$ is independent of $i$, we say that the graph is {\em regular}, or, more specifically,
$\delta$-{\em regular} if $|N(i)| = \delta$ for all vertices $i$.
We adopt the standard notation $K_n$ for the complete graph on $n$ vertices and
$C_n$ for the cycle graph on $n$ vertices.
If $G$ and $H$ are graphs, $G + H$ is their vertex-disjoint union.
The {\em girth} of a graph is the length of the shortest cycle in it.


Suppose $G$ is a $\delta$-regular graph with vertex set $V$ of cardinality $n$.
A {\em regular-graph design} for $G$ with point set $V$ and parameters $k$, $r$, $\lambda$
is a collection $\mathcal{B}$ of {\em blocks}, subsets of $V$, such that:
\begin{enumerate}
\item[(i)]Each block contains the same number of points, $k$;
\item[(ii)]Each point occurs in the same number of blocks, $r$;
\item[(iii)]Each pair of points that is not an edge of $G$ occurs in precisely $\lambda$ blocks;
\item[(iv)]Each pair of points that is an edge of $G$ occurs in precisely $\lambda + 1$ blocks.
\end{enumerate}
This definition is the same as that which features in the papers of
Bailey \& Cameron, \cite{BaileyCameron2009, BaileyCameron2011}, and Wallis, \cite{Wallis1996}.
We often refer to a regular-graph design simply as a design,
or, to emphasise the graph $G$, a design for $G$,
or, to emphasise only the cardinality $n$ of the graph's vertex set, a design of order $n$.

In the remainder of the paper we construct regular-graph designs by a specific method.
We consider only the case $\lambda = 1$, i.e.\ non-adjacent pairs occur once, adjacent pairs occur twice.
In Section~\ref{sec:The-basic-construction} we describe our basic construction.
We also employ recursive constructions involving group divisible designs, which are defined in
Section~\ref{sec:group-divisible-designs}.
In Sections~\ref{sec:2-regular-graphs} -- \ref{sec:4-or-more-regular-graphs}
we address the construction of regular-graph designs from $\delta$-regular graphs with $\delta \in \{2,3,4\}$.

\section{The basic construction}\label{sec:The-basic-construction}

%
%
%
%
%

Given a $\delta$-regular graph $G$ with $n$ vertices and girth at least 5,
we construct a design consisting of blocks of size $k = \delta + 1$, where
each edge of $G$ occurs in precisely two blocks, and each pair of non-adjacent vertices of $G$ occurs in precisely one block.
The number of pairs is
$$\dfrac{n(n - 1)}{2} + \dfrac{n \delta}{2},$$
the number of blocks is
$$b = \dfrac{n(n + \delta - 1)}{\delta (\delta + 1)},$$
and the number of blocks that contain a specific point is
$$r = \dfrac{bk}{n} = \dfrac{n - 1}{\delta} + 1.$$

In our construction the duplicated pairs arise from the set $\mathcal{B}_\mathrm{N}$ of blocks of the form
$N[i] = \{i\} \cup N(i)$, where $i$ is a vertex of $G$, and $N(i)$ is the set of neighbours of $i$.
Let us call these {\em neighbourhood blocks}.
For the construction to work, two things must be right.
\begin{enumerate}
\item[(i)]The neighbourhoods $N(i)$ must be edgeless sets of vertices. \\
Otherwise, if $a, b \in N(i)$ and $\{a,b\}$ is an edge, then $\{a,b\}$ appears in at least three blocks,
$N[i]$, $N[a]$ and $N[b]$.
\item[(ii)]We must have $|N(i) \cap N(j)| \le 1$ for distinct $i$, $j$. \\
Otherwise, if $\{a,b\} \subseteq N(i) \cap N(j)$ for some distinct $i$, $j$, $a$ and $b$,
then pair $\{i,j\}$ occurs in blocks $N[a]$ and $N[b]$;
consequently $\{i,j\}$ is an edge and so $\{i,j\}$ is in $N[i]$ and $N[j]$ as well.
\end{enumerate}
Both conditions are met if and only if the graph has girth at least 5.

If $n = \delta^2 + 1$, the design is complete; all pairs of points are accounted for
with their correct multiplicities in $\mathcal{B}_\mathrm{N}$.
This can happen only when $\delta \in \{0, 1, 2, 3, 7, 57\}$ in which case the graphs are
$K_1, K_2$, the cycle graph $C_5$, the Petersen graph, the Hoffman--Singleton graph and
a 57-regular 3250-vertex graph with girth 5 that might or might not exist,
\cite{HoffmanSingleton1960}, \cite[\S 11.12]{Cameron1994}.
If $n > \delta^2 + 1$, we need more blocks.
We call these {\em remainder blocks} and we denote them by $\mathcal{B}_\mathrm{R}$.
They contain all point pairs that are not in blocks of $\mathcal{B}_\mathrm{N}$.
Hence the block set of the design is
$$\mathcal{B} = \mathcal{B}_\mathrm{N} \cup \mathcal{B}_\mathrm{R},$$
and
$$|\mathcal{B}_\mathrm{N}| = n,~~~~ |\mathcal{B}_\mathrm{R}| = \dfrac{n (n - \delta^2 - 1)}{\delta (\delta + 1)}.$$
Obviously $|\mathcal{B}_\mathrm{R}|$ must be a non-negative integer, which is zero when $n = \delta^2 + 1$.
Also observe that the graph is encoded entirely in $\mathcal{B}_\mathrm{N}$.
Thus we arrive at the following admissibility conditions for design order $n$:
\begin{equation} \label{eqn:admissibility-conditions}
\begin{array}{rcl}
n & \ge & \delta^2 + 1, \\
n & \equiv & 1 \adfmod{\delta}, \\
n(n + \delta - 1) & \equiv & 0 \adfmod{\delta(\delta + 1)}.
\end{array}
\end{equation}
%
\begin{lemma} \label{lem-n-delta2-1}
Each point in a design of order $n$ for a $\delta$-regular graph is paired with
$n - \delta^2 - 1$ points in the design's remainder blocks.
\end{lemma}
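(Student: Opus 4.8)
The plan is to fix a point $x$ and count how many of the pairs at $x$ are ``used up'' by the neighbourhood blocks $\mathcal{B}_\mathrm{N}$, the remaining ones being forced into the remainder blocks $\mathcal{B}_\mathrm{R}$.

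First I would determine exactly which neighbourhood blocks contain $x$. Since $x \in N[i]$ for $i \ne x$ precisely when $x \in N(i)$, i.e.\ when $i \in N(x)$, the neighbourhood blocks through $x$ are $N[x]$ together with the $\delta$ blocks $N[j]$ for $j \in N(x)$; hence $x$ lies in exactly $\delta+1$ of them. Next I would classify the pairs at $x$ occurring in these blocks as edge-pairs or non-edge-pairs. For a neighbour $j$, the edge-pair $\{x,j\}$ occurs in $N[x]$ and in $N[j]$, and in no other neighbourhood block; that is already the prescribed multiplicity $2$, so edge-pairs contribute nothing to $\mathcal{B}_\mathrm{R}$. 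Every non-edge-pair at $x$ appearing in $\mathcal{B}_\mathrm{N}$ must therefore arise as $\{x,y\}$ with $y \in N(j)\setminus\{x\}$ for some $j \in N(x)$.

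Then I would show these pairs are all distinct, i.e.\ that the $\delta$ sets $N(j)\setminus\{x\}$ ($j \in N(x)$) are pairwise disjoint. This is where condition (ii) of the basic construction enters: for distinct neighbours $j_1,j_2$ of $x$ we have $|N(j_1)\cap N(j_2)| \le 1$, and since $x$ is a common neighbour this intersection is exactly $\{x\}$, so $(N(j_1)\setminus\{x\}) \cap (N(j_2)\setminus\{x\}) = \emptyset$. Consequently $x$ is paired, inside $\mathcal{B}_\mathrm{N}$, with exactly $\delta(\delta-1) = \delta^2-\delta$ distinct non-neighbours (a quick incidence tally, $(\delta+1)\delta = 2\delta + (\delta^2-\delta)$, serves as a sanity check).

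Finally I would total up: $x$ has $n-1-\delta$ non-neighbours, of which $\delta^2-\delta$ are already paired with $x$ in $\mathcal{B}_\mathrm{N}$; every remaining non-edge-pair at $x$, of which there are $(n-1-\delta)-(\delta^2-\delta) = n-\delta^2-1$, must still occur once, and by construction the only place left for it is a remainder block. The one point needing genuine care — and the only real obstacle — is the disjointness argument for the sets $N(j)\setminus\{x\}$, so I would invoke the girth-$\ge 5$ hypothesis (via condition (ii)) explicitly there rather than treat it as obvious.
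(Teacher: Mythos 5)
Your proof is correct and follows essentially the same route as the paper's: identify the $\delta+1$ neighbourhood blocks through the point, count the $\delta + \delta(\delta-1) = \delta^2$ distinct points paired with it there, and deduce that the remaining $n-1-\delta^2$ pairs must lie in remainder blocks. You simply make explicit two details the paper leaves implicit, namely that the girth condition forces the sets $N(j)\setminus\{x\}$ to be pairwise disjoint and that edge-pairs already reach multiplicity two inside $\mathcal{B}_\mathrm{N}$.
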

%
\begin{proof}
Consider a vertex $i$ and suppose $N(i) = \{i_1, i_2, \dots, i_\delta\}$.
Then $i$ occurs in
$(n - 1)/\delta + 1$
blocks of which $\delta + 1$ are in $\mathcal{B}_\mathrm{N}$, namely
$N[i]$, $N[i_1]$, $N[i_2]$, \dots, $N[i_{\delta}]$.
Hence $i$ is paired with $\delta + \delta(\delta - 1) = \delta^2$ distinct points in blocks of $\mathcal{B}_\mathrm{N}$ and
with $n - 1 - \delta^2$ points in blocks of $\mathcal{B}_\mathrm{R}$.
\end{proof}

Before we end this section it is appropriate to state that this paper was motivated by
Peter Cameron's blog of 17 June 2023, \cite{Cameron2023}.

\begin{center}
\begin{minipage}{0.9\textwidth}\small
\dots\ This brought to my mind something I have discussed and  asked many people about: finding all Sylvester designs.
These are block designs with 36 points and 48 blocks of size 6,
two points lying in two blocks if they are adjacent in the
distance-transitive Sylvester graph, and in a unique block
otherwise. \dots
\end{minipage}
\end{center}
Although we do not solve Peter's query we can neatly illustrate the method described above by constructing a Sylvester design.
The Sylvester graph is 5-regular and has girth 5.
From the edges
{$\{0, 1\}$, $\{0, 3\}$, $\{0, 11\}$, $\{0, 19\}$, $\{0, 27\}$, $\{1, 4\}$, $\{1, 12\}$, $\{1, 20\}$, $\{1, 28\}$, $\{2, 4\}$, $\{2, 5\}$, $\{2, 14\}$, $\{2, 22\}$, $\{2, 30\}$, $\{3, 5\}$, $\{3, 6\}$, $\{3, 23\}$, $\{3, 31\}$, $\{4, 7\}$, $\{4, 15\}$, $\{4, 32\}$, $\{5, 8\}$, $\{5, 16\}$, $\{5, 24\}$, $\{6, 7\}$, $\{6, 12\}$, $\{6, 21\}$, $\{6, 34\}$, $\{7, 8\}$, $\{7, 13\}$, $\{7, 26\}$, $\{8, 9\}$, $\{8, 18\}$, $\{8, 27\}$, $\{9, 10\}$, $\{9, 19\}$, $\{9, 28\}$, $\{9, 35\}$, $\{10, 12\}$, $\{10, 13\}$, $\{10, 16\}$, $\{10, 31\}$, $\{11, 13\}$, $\{11, 17\}$, $\{11, 24\}$, $\{11, 32\}$, $\{12, 14\}$, $\{12, 25\}$, $\{13, 22\}$, $\{13, 33\}$, $\{14, 17\}$, $\{14, 27\}$, $\{14, 35\}$, $\{15, 16\}$, $\{15, 19\}$, $\{15, 29\}$, $\{15, 34\}$, $\{16, 17\}$, $\{16, 20\}$, $\{17, 21\}$, $\{17, 26\}$, $\{18, 20\}$, $\{18, 21\}$, $\{18, 25\}$, $\{18, 32\}$, $\{19, 21\}$, $\{19, 22\}$, $\{20, 23\}$, $\{20, 33\}$, $\{21, 30\}$, $\{22, 23\}$, $\{22, 25\}$, $\{23, 26\}$, $\{23, 35\}$, $\{24, 25\}$, $\{24, 28\}$, $\{24, 34\}$, $\{25, 29\}$, $\{26, 28\}$, $\{26, 29\}$, $\{27, 29\}$, $\{27, 33\}$, $\{28, 30\}$, $\{29, 31\}$, $\{30, 31\}$, $\{30, 33\}$, $\{31, 32\}$, $\{32, 35\}$, $\{33, 34\}$, $\{34, 35\}$}
we immediately obtain 36 neighbourhood blocks
$$ \mathcal{B}_\mathrm{N} = \{N[i]: i = 0, 1, \dots, 35\}.$$
That leaves 180 pairs unaccounted for,
{$\{0, 2\}$, $\{0, 7\}$, $\{0, 10\}$, $\{0, 16\}$, $\{0, 18\}$, $\{0, 25\}$, $\{0, 26\}$, $\{0, 30\}$, $\{0, 34\}$, $\{0, 35\}$, $\{1, 5\}$, $\{1, 8\}$, $\{1, 13\}$, $\{1, 17\}$, $\{1, 21\}$, $\{1, 22\}$, $\{1, 29\}$, $\{1, 31\}$, $\{1, 34\}$, $\{1, 35\}$, $\{2, 6\}$, $\{2, 9\}$, $\{2, 10\}$, $\{2, 11\}$, $\{2, 18\}$, $\{2, 20\}$, $\{2, 26\}$, $\{2, 29\}$, $\{2, 34\}$, $\{3, 4\}$, $\{3, 9\}$, $\{3, 13\}$, $\{3, 14\}$, $\{3, 15\}$, $\{3, 17\}$, $\{3, 18\}$, $\{3, 25\}$, $\{3, 28\}$, $\{3, 33\}$, $\{4, 9\}$, $\{4, 10\}$, $\{4, 17\}$, $\{4, 21\}$, $\{4, 23\}$, $\{4, 24\}$, $\{4, 25\}$, $\{4, 27\}$, $\{4, 33\}$, $\{5, 12\}$, $\{5, 13\}$, $\{5, 19\}$, $\{5, 21\}$, $\{5, 26\}$, $\{5, 29\}$, $\{5, 32\}$, $\{5, 33\}$, $\{5, 35\}$, $\{6, 9\}$, $\{6, 11\}$, $\{6, 16\}$, $\{6, 20\}$, $\{6, 22\}$, $\{6, 27\}$, $\{6, 28\}$, $\{6, 29\}$, $\{6, 32\}$, $\{7, 14\}$, $\{7, 16\}$, $\{7, 19\}$, $\{7, 20\}$, $\{7, 24\}$, $\{7, 25\}$, $\{7, 30\}$, $\{7, 31\}$, $\{7, 35\}$, $\{8, 11\}$, $\{8, 12\}$, $\{8, 15\}$, $\{8, 17\}$, $\{8, 22\}$, $\{8, 23\}$, $\{8, 30\}$, $\{8, 31\}$, $\{8, 34\}$, $\{9, 11\}$, $\{9, 17\}$, $\{9, 20\}$, $\{9, 25\}$, $\{9, 29\}$, $\{9, 33\}$, $\{10, 18\}$, $\{10, 21\}$, $\{10, 23\}$, $\{10, 24\}$, $\{10, 26\}$, $\{10, 27\}$, $\{10, 34\}$, $\{11, 12\}$, $\{11, 15\}$, $\{11, 20\}$, $\{11, 23\}$, $\{11, 29\}$, $\{11, 30\}$, $\{12, 15\}$, $\{12, 19\}$, $\{12, 23\}$, $\{12, 26\}$, $\{12, 30\}$, $\{12, 32\}$, $\{12, 33\}$, $\{13, 14\}$, $\{13, 15\}$, $\{13, 18\}$, $\{13, 21\}$, $\{13, 28\}$, $\{13, 29\}$, $\{13, 35\}$, $\{14, 15\}$, $\{14, 18\}$, $\{14, 19\}$, $\{14, 20\}$, $\{14, 24\}$, $\{14, 28\}$, $\{14, 31\}$, $\{15, 18\}$, $\{15, 23\}$, $\{15, 28\}$, $\{15, 30\}$, $\{16, 22\}$, $\{16, 25\}$, $\{16, 27\}$, $\{16, 28\}$, $\{16, 30\}$, $\{16, 32\}$, $\{16, 35\}$, $\{17, 22\}$, $\{17, 25\}$, $\{17, 31\}$, $\{17, 33\}$, $\{17, 34\}$, $\{18, 26\}$, $\{18, 28\}$, $\{18, 34\}$, $\{19, 20\}$, $\{19, 24\}$, $\{19, 26\}$, $\{19, 31\}$, $\{19, 32\}$, $\{19, 33\}$, $\{20, 24\}$, $\{20, 29\}$, $\{20, 31\}$, $\{21, 23\}$, $\{21, 24\}$, $\{21, 27\}$, $\{21, 29\}$, $\{21, 35\}$, $\{22, 27\}$, $\{22, 28\}$, $\{22, 31\}$, $\{22, 32\}$, $\{22, 34\}$, $\{23, 24\}$, $\{23, 27\}$, $\{23, 30\}$, $\{24, 27\}$, $\{24, 31\}$, $\{25, 30\}$, $\{25, 33\}$, $\{25, 35\}$, $\{26, 32\}$, $\{26, 33\}$, $\{26, 34\}$, $\{27, 28\}$, $\{27, 32\}$, $\{28, 32\}$, $\{29, 35\}$, $\{30, 35\}$, $\{31, 34\}$, $\{32, 33\}$},
from which it is easy to construct by hand the 12 remainder blocks,
\begin{center}
\begin{tabular}{l@{~}l}
 $\mathcal{B}_\mathrm{R} = $
 & $\{0, 2, 10, 18, 26, 34\}$, $\{0, 7, 16, 25, 30, 35\}$, $\{1, 5, 13, 21, 29, 35\}$, \\
 & $\{1, 8, 17, 22, 31, 34\}$, $\{2, 6, 9, 11, 20, 29\}$, $\{3, 4, 9, 17, 25, 33\}$, \\
 & $\{3, 13, 14, 15, 18, 28\}$, $\{4, 10, 21, 23, 24, 27\}$, $\{5, 12, 19, 26, 32, 33\}$, \\
 & $\{6, 16, 22, 27, 28, 32\}$, $\{7, 14, 19, 20, 24, 31\}$, $\{8, 11, 12, 15, 23, 30\}$,
\end{tabular}
\end{center}
to complete the design---forty-eight blocks of size 6.
See also \cite{BaileyCameronSoicherWilliams2020}.

\section{Group divisible designs}\label{sec:group-divisible-designs}
For the purpose of this paper, a {\em group divisible design}, $k$-GDD, of type $g_1^{u_1} g_2^{u_2} \dots g_r^{u_r}$
is an ordered triple ($V, \mathcal{G}, \mathcal{B}$)
where
\begin{enumerate}
\item[(i)]$V$ is a set of $u_1 g_1 + u_2 g_2 + \dots + u_r g_r$ {\em points},
\item[(ii)]$\mathcal{G}$ is a partition of $V$ into $u_i$ subsets of size $g_i$, $i = 1, 2, \dots, r$, called \textit{groups}, and
\item[(iii)]$\mathcal{B}$ is a collection of subsets of cardinality $k$, called \textit{blocks}, which has the property that each pair of points from distinct groups occurs in precisely one block but
    a pair of distinct points from the same group does not occur in any block.
\end{enumerate}
Our next lemma asserts the existence of the group divisible designs that we require elsewhere in the paper.
%
\begin{lemma} \label{lem-GDD-existence}
There exists
\begin{enumerate}
\item[(i)]a $3$-$\mathrm{GDD}$ of type $g^3$ whenever $g \ge 1$;
\item[(ii)]a $3$-$\mathrm{GDD}$ of type $5^{6t} m^1$ whenever $t \ge 1$, $m \equiv 1 \adfmod{2}$ and $m \le 5(6t - 1)$;
\item[(iii)]a $3$-$\mathrm{GDD}$ of type $5^8 17^1$;
\item[(iv)]a $4$-$\mathrm{GDD}$ of type $10^{3t} m^1$ whenever $t \ge 2$, $m \equiv 1 \adfmod{3}$ and $m \le 5(3t - 1)$;
\item[(v)]a $4$-$\mathrm{GDD}$ of type $10^4$;
\item[(vi)]
a $5$-$\mathrm{GDD}$ of type $105^{4t+1}$ for $t \ge 1$,
a $5$-$\mathrm{GDD}$ of type $117^{20t+1}$ for $t \ge 1$ and
a $5$-$\mathrm{GDD}$ of type $117^{20t+5}$ for $t \ge 0$.
\end{enumerate}
\end{lemma}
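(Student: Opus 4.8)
The plan is to take the six items one at a time, in each case matching the required group divisible design with a well-understood structure. Items (i) and (v) are immediate: a $3$-GDD of type $g^3$ is precisely a transversal design $\mathrm{TD}(3,g)$, equivalently a single Latin square of order $g$, which exists for every $g\ge 1$; and a $4$-GDD of type $10^4$ is a $\mathrm{TD}(4,10)$, equivalently a pair of mutually orthogonal Latin squares of order $10$, whose existence is a classical (and celebrated) fact. So I would simply quote these.

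Items (ii), (iii), (iv) are group divisible designs made up of $u$ groups of a fixed size $g$ (with $g=5$ for block size $3$ and $g=10$ for block size $4$) together with a single larger group of size $m$. The spectra of $3$-GDDs of type $g^u m^1$ and of $4$-GDDs of type $g^u m^1$, for the values of $g$ that occur here, have been determined completely in the design-theory literature (Colbourn--Hoffman--Rees for block size $3$; Ge, Rees and later authors for block size $4$): such a design exists if and only if the elementary divisibility conditions hold together with the inequality $(k-2)m\le g(u-1)$, where $k$ is the block size. So I would check that the stated hypotheses land inside these spectra. For $k=3$ the inequality reads $m\le g(u-1)$, which for $g=5$, $u=6t$ is exactly $m\le 5(6t-1)$, and the condition $m\equiv 1\adfmod 2$ is precisely what makes each point of a size-$5$ group lie in a whole number of blocks (since $5(6t-1)$ is odd), while each point of the $m$-group and the global block count are automatically fine. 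For $k=4$ the inequality reads $m\le g(u-1)/2$, which for $g=10$, $u=3t$ is exactly $m\le 5(3t-1)$, with $m\equiv 1\adfmod 3$ playing the analogous role, and $t\ge 2$ keeps $u=3t$ at least $6$, away from the degenerate case $u=3$. Item (iii) is the extra instance $5^8 17^1$: here $17$ is odd and $17\le 5\cdot 7=35$, so the same spectrum result applies.

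Item (vi) is the substantial part. Each of the three families has the form $g^u$ with $g\in\{105,117\}$ and $u\ge 5$, and for $u=5$ a $5$-GDD of type $g^5$ is just a $\mathrm{TD}(5,g)$, that is, three mutually orthogonal Latin squares of order $g$. Writing $117=9\cdot 13$ and applying MacNeish's product bound yields at least $\min\{8,12\}=8$ mutually orthogonal Latin squares of order $117$, so $\mathrm{TD}(5,117)$ exists; in particular $117^5$ (the $t=0$ case of the last family) is settled. For $u>5$, and for $105^5$ as well, the plan is to apply Wilson's fundamental (weighting) construction to a suitable master design on $u$ points, giving every point equal weight $w$ and filling each block with a copy of $\mathrm{TD}(5,w)$ (all the master designs below have blocks of size $5$ only). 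For the families $117^{20t+1}$ and $117^{20t+5}$, the admissible numbers of groups $u\equiv 1,5\adfmod{20}$ are exactly those for which a Steiner system $S(2,5,u)$ exists (Hanani), so I would take the master design to be such a Steiner system, assign weight $117$, and fill with copies of $\mathrm{TD}(5,117)$; the output is a $5$-GDD of type $117^u$. For the family $105^{4t+1}$, I would instead take the master design to be a $5$-GDD of type $5^{4t+1}$, assign weight $21$, and fill with copies of $\mathrm{TD}(5,21)$ (which exists since there are at least three mutually orthogonal Latin squares of order $21$); since $5\cdot 21=105$ the output is a $5$-GDD of type $105^{4t+1}$.

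The main obstacle, unsurprisingly, is item (vi), and within it the order-$105$ family. The construction just sketched needs a $5$-GDD of type $5^u$ to be available for \emph{every} admissible $u\equiv 1\adfmod 4$ with $u\ge 5$: the values $u\equiv 1,5\adfmod{20}$ are easy, since inflating a Steiner system $S(2,5,u)$ by weight $5$ with $\mathrm{TD}(5,5)$ fillings already yields one, but the values $u\equiv 9,13,17\adfmod{20}$ must be supplied by other (standard recursive) constructions or quoted from the literature on $5$-GDDs with uniform group size. It also relies on there being three mutually orthogonal Latin squares of order $21$ — known, though less trivial than the order-$117$ case. Once these ingredients and the cited spectrum theorems are in hand, everything else is careful bookkeeping: identify the structure, check the arithmetic, and assemble.
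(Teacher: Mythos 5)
Your proposal is correct in substance, but it takes a genuinely different route from the paper: the paper's entire proof consists of citations (Zhu and Colbourn--Hoffman--Rees, or Ge's Handbook chapter, for (i)--(iii); Forbes--Forbes for (iv); Ge's Handbook for (v); Ge--Ling or the Handbook for (vi)), whereas you reconstruct most items from more primitive ingredients. For (i) and (v) your identifications with $\mathrm{TD}(3,g)$ and $\mathrm{TD}(4,10)$ are exactly right; for (ii)--(iv) you are in effect invoking the same spectrum theorems the paper cites, and your verification of the divisibility and bound conditions ($m$ odd with $m\le 5(6t-1)$ for $k=3$; $m\equiv 1\adfmod{3}$ with $m\le 10(3t-1)/2$ for $k=4$) is accurate. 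The real added value is in (vi): your observation that $u\equiv 1,5\adfmod{20}$ for the $117$ families coincides exactly with Hanani's spectrum for $S(2,5,u)$, so that weighting such a Steiner system by $117$ and filling with $\mathrm{TD}(5,117)$ (available via MacNeish since $117=9\cdot 13$) does the whole job, is a clean self-contained reduction that the paper does not give. For the $105^{4t+1}$ family your reduction to a $5$-GDD of type $5^{4t+1}$ weighted by $21$ is sound, but as you acknowledge it leaves the existence of uniform $5$-GDDs of type $5^u$ for all $u\equiv 1\adfmod{4}$, $u\ge 5$, to the literature; that spectrum is indeed known and is covered by the very Handbook theorem the paper cites for (vi), so nothing is circular, but on this point your argument ends up leaning on essentially the same external result as the paper rather than eliminating it. In short: your version buys more transparency and an almost elementary treatment of the $117$ case, at the cost of a longer argument; the paper buys brevity by outsourcing everything.
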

%
\begin{proof}
For (i)--(iii), see \cite{Zhu1984} and \cite{ColbournHoffmanRees1992}, or \cite[Theorems IV.4.1 and IV.4.2]{Ge2007}.
For (iv), see \cite[Theorem 1.2 (iii)]{ForbesForbes2018}.
For (v), see \cite[Theorem IV.4.6]{Ge2007}.
For (vi), see \cite{GeLing2005} or \cite[Theorem IV.4.16]{Ge2007}.
\end{proof}

\section{2-regular graphs}\label{sec:2-regular-graphs}

Before discussing 2-regular graphs we dismiss two trivial cases.
\begin{enumerate}
\item[(i)]For {$\delta = 0$}, the block size is 1, the only design is the empty set and the corresponding graph is $K_1$.
\item[(ii)]For {$\delta = 1$}, the block size is 2, and we have $n \ge 2$, $n \equiv 0 \adfmod{2}$.
The blocks of the design are the edges of the corresponding graph, which is $n/2$ copies of $K_2$,
together with the edges of $K_n$ on the same vertex set.
\end{enumerate}

For {$\delta = 2$}, the block size is 3, and
the admissibility conditions (\ref{eqn:admissibility-conditions}) reduce to $n \ge 5$, $n \equiv 3 \text{~or~} 5 \adfmod{6}$.
We have $|\mathcal{B}_\mathrm{R}| = {n (n - 5)}/{6}$.
These designs are easy to find if they exist and $n$ is not too large.
When $n = 5$ the graph is $C_5$ and $\mathcal{B}_\mathrm{R} = \{\}$.

\begin{theorem}\label{thm:2-regular-nonexistence}
No design exists for
\begin{equation}\label{eqn:2-regular-nonexistence}
\begin{array}{ll}
     n = 9:  & C_9, \\
     n = 11: & C_{11},~~ C_5 + C_6, \\
     n = 15: & C_6 + C_9,~~ C_7 + C_8, \\
     n = 17: & C_7 + C_{10},~~ C_8 + C_9,~~ C_5 + C_5 + C_7.
\end{array}
\end{equation}
\end{theorem}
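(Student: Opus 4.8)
The plan is to recast a design as a triangle decomposition and then eliminate each case by counting. By the set-up of Section~\ref{sec:The-basic-construction} (cf.\ Lemma~\ref{lem-n-delta2-1}), a design of order $n$ for $G = C_{m_1}+\dots+C_{m_s}$ has block set $\mathcal{B}_\mathrm{N}\cup\mathcal{B}_\mathrm{R}$, and the neighbourhood blocks $N[v]$ already account for every pair within a component $V_i$ at cyclic distance at most $2$ — the edges twice and the distance-$2$ pairs once. Hence $\mathcal{B}_\mathrm{R}$ must be a triangle decomposition of the graph $R_G$ obtained from $K_n$ by deleting, inside each $V_i$, all pairs at $C_{m_i}$-distance $\le 2$; equivalently, $R_G$ induces on $V_i$ the graph with $i\sim j$ iff their $C_{m_i}$-distance is $\ge 3$, and induces the complete bipartite graph between distinct $V_i$. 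So it suffices to show that $R_G$ admits no triangle decomposition for each graph in~(\ref{eqn:2-regular-nonexistence}). I would first record the small component types: $R_G$ is edgeless on a $C_5$, a perfect matching ($3$ edges) on a $C_6$, a $7$-cycle on a $C_7$, a triangle-free cubic graph ($12$ edges) on a $C_8$, the graph $C_9(3,4)$ (distances $3$ and $4$; its only triangles are the three whose edges all have distance $3$) on a $C_9$, the graph $C_{10}(3,4,5)$ ($25$ edges) on a $C_{10}$, and the vertex-transitive $6$-regular graph $C_{11}(3,4,5)$ ($33$ edges) on a $C_{11}$.

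For the two single-cycle graphs the argument is immediate. When $n=9$, $R_G=C_9(3,4)$ has $18$ edges but only $3$ triangles, covering just $9$ edges, so there is no decomposition. When $n=11$, $R_G=C_{11}(3,4,5)$: the only perfect matching of the six-vertex neighbourhood of a vertex $v$ by edges of $R_G$ is $\{v{+}3,v{+}6\},\{v{+}4,v{+}7\},\{v{+}5,v{+}8\}$, so in any decomposition the three triangles through $v$ are $\{v,v{+}3,v{+}6\}$, $\{v,v{+}4,v{+}7\}$, $\{v,v{+}5,v{+}8\}$; then the edge $\{0,3\}$ is forced to lie both in $\{0,3,6\}$ (from $v=0$) and in $\{0,3,8\}$ (from $v=3$), which is impossible.

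For the six disconnected graphs I would count cross-component edges, exploiting that a $C_5$-component gives no internal edges, a $C_6$-component only a matching, and a $C_7$- or $C_8$-component a triangle-free graph, so that the amount of internal structure a triangle can carry is tightly bounded. For $C_5+C_6$, a triangle has at most one vertex in the $C_5$-part, hence exactly two in the $C_6$-part (one of the $3$ matching edges), forcing $3$ triangles against the $11$ required. For $C_6+C_9$, the $3$ matching edges force exactly $3$ triangles with two vertices in the $C_6$-part, leaving $48$ cross edges to be covered in pairs by triangles with one vertex in the $C_6$-part, which would consume $24$ edges of $C_9(3,4)$ though only $18$ are available. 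For $C_7+C_8$, no triangle lies inside a component, so each uses exactly one of the $7+12=19$ internal edges, forcing exactly $19$ triangles against the $25$ required. For $C_7+C_{10}$ (resp.\ $C_8+C_9$), the $7$ edges of the $C_7$-side $7$-cycle (resp.\ the $12$ internal edges of the $C_8$-side) force that many triangles with two vertices there, leaving $56$ (resp.\ $48$) cross edges and hence $28$ (resp.\ $24$) triangles with one vertex there, consuming more edges of $C_{10}(3,4,5)$ (resp.\ $C_9(3,4)$) than its $25$ (resp.\ $18$). For $C_5+C_5+C_7$, the $K_{5,5}$ between the $C_5$-parts forces exactly $25$ triangles with one vertex in each $C_5$-part, the $7$ edges of the $C_7$-side force $7$ triangles with two vertices there, and balancing the two copies of $K_{5,7}$ then forces $5$ triangles of each mixed $C_5$--$C_7$ type, i.e.\ $10$ altogether, contradicting the $7$. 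In each case $R_G$ has no triangle decomposition, so no design exists.

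I expect the only delicate point to be the $C_{11}$ case: there the pure edge count is consistent — $C_{11}(3,4,5)$ has $33$ edges, a multiple of $3$, and is $6$-regular — so the contradiction must come from the rigidity of the triangles forced through a vertex, and one has to verify that the relevant six-vertex neighbourhood graph really has a unique perfect matching in $R_G$. The other seven cases close purely by the cross-component edge count, once the internal structure of each component is pinned down.
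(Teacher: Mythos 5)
Your proof is correct, and for most of the cases it takes a genuinely different route from the paper. The paper handles only $C_9$, $C_{11}$ and $C_5+C_6$ by hand (via pair occurrence arrays and block-forcing, essentially the same forcing you carry out: $\{0,5\}$ lies in no admissible triple for $C_9$; the unique extensions through a vertex collide for $C_{11}$; the three antipodal pairs of the $C_6$ force three blocks through a $C_5$-vertex for $C_5+C_6$), and then disposes of the remaining five graphs by exhaustive computer search. You instead reformulate $\mathcal{B}_\mathrm{R}$ as a triangle decomposition of the circulant-plus-join graph $R_G$ and close all eight cases by counting; I have checked the component descriptions ($C_6$ gives a $3$-edge matching, $C_7$ and $C_8$ give triangle-free graphs on $7$ and $12$ edges, $C_9(3,4)$ has only the three distance-$3$ triangles, $C_{10}(3,4,5)$ has $25$ edges, $C_{11}(3,4,5)$ has a unique $R_G$-perfect-matching on each neighbourhood) and the resulting edge counts ($3$ vs.\ $11$ blocks for $C_5+C_6$; $24$ vs.\ $18$ internal edges for $C_6+C_9$; $19$ vs.\ $25$ blocks for $C_7+C_8$; $28$ vs.\ $25$ and $24$ vs.\ $18$ internal edges for $C_7+C_{10}$ and $C_8+C_9$; $10$ vs.\ $7$ internal edges for $C_5+C_5+C_7$), and they are all right. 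What your approach buys is a fully human-verifiable proof that also explains \emph{why} these graphs fail (a shortage of internal edges in the small components relative to the cross-component load), at the cost of a page of case analysis; what the paper's approach buys is brevity and a method that scales to the much larger searches it needs elsewhere (orders $16$, $22$ and $25$). One caveat applies equally to both proofs: the argument assumes the design contains all neighbourhood blocks $N[v]$, i.e.\ is of the form $\mathcal{B}_\mathrm{N}\cup\mathcal{B}_\mathrm{R}$ produced by the basic construction, which is the paper's standing convention in this section rather than a consequence of the abstract definition of a regular-graph design with $k=3$.
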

\begin{proof}
In Figure~\ref{fig:2-regular-nonexistence-C9-C11-C5+C6} we show the pair occurrence arrays
associated with designs for $C_9$, $C_{11}$ and $C_5 + C_6$.
We assume the vertex set is $\{0, 1, \dots\}$.
We use `{\tt X}' to indicate the pairs of $\mathcal{B}_\mathrm{N}$ and `{\tt -}' for the pairs of $\mathcal{B}_\mathrm{R}$.
\begin{figure}[h]
\begin{minipage}{\textwidth}
{\small\begin{verbatim}
  1 2 3 4 5 6 7 8     1 2 3 4 5 6 7 8 9 A     1 2 3 4 5 6 7 8 9 A
0 X X - - - - X X   0 X X - - - - - - X X   0 X X X X - - - - - -
1   X X - - - - X   1   X X - - - - - - X   1   X X X - - - - - -
2     X X - - - -   2     X X - - - - - -   2     X X - - - - - -
3       X X - - -   3       X X - - - - -   3       X - - - - - -
4         X X - -   4         X X - - - -   4         - - - - - -
5           X X -   5           X X - - -   5           X X - X X
6             X X   6             X X - -   6             X X - X
7               X   7               X X -   7               X X -
                    8                 X X   8                 X X
                    9                   X   9                   X
\end{verbatim}}
\end{minipage}
\caption{Pair occurrence arrays for $C_9$, $C_{11}$ and $C_5 + C_6$}
\label{fig:2-regular-nonexistence-C9-C11-C5+C6}
\end{figure}

From Figure~\ref{fig:2-regular-nonexistence-C9-C11-C5+C6},
non-existence of a design for $C_9$ follows from the observation that pair
$\{0,5\}$ clearly cannot be extended to a block $\{0, 5, x\}$.

For $C_{11}$, pair \{0,5\} must extend to block \{0,5,8\},
then \{0,4\} to \{0,4,7\}, \{0,3\} to \{0,3,6\} and \{1,6\} to \{1,6,9\},
but now pair $\{1,7\}$ cannot be extended.

For $C_5 + C_6$, blocks \{0,5,8\}, \{0,6,9\}, \{0,7,A\} are forced and then pair $\{1,5\}$ cannot be extended.

For the other five graphs in (\ref{eqn:2-regular-nonexistence}), it is not difficult to program exhaustive searches.
In each case it is possible to run the search to completion in a short time.
\end{proof}

We conjecture that designs exist for all admissible 2-regular graphs except those indicated in
Theorem~\ref{thm:2-regular-nonexistence}.
On the other hand, we are able to prove that for each admissible $n \notin \{9, 11\}$,
there exists a design for some 2-regular graph with $n$ vertices;
see Theorem~\ref{thm:designs-2-regular-n-3-5-mod-6}.
Moreover, when $n \equiv 5 \adfmod{6}$ we can prove existence specifically for cycle graphs.

\begin{theorem}\label{thm:designs-2-regular-n-5-mod-6}
A design exists for the cycle graph $C_n$ for every $n \equiv 5 \adfmod{6}$, $n \ge 5$, except for $n = 11$.
\end{theorem}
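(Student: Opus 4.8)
My plan is to build the remainder blocks cyclically over $\mathbb{Z}_n$ by the method of differences, since the neighbourhood blocks are forced. First I would take the vertex set of $C_n$ to be $\mathbb{Z}_n$ with edges $\{i,i+1\}$, so that the $n$ neighbourhood blocks are the consecutive triples $\{i-1,i,i+1\}$. Together these contain each edge $\{i,i+1\}$ twice and each pair $\{i,i+2\}$ (circular distance $2$) exactly once, so constructing a design for $C_n$ is equivalent to finding a set $\mathcal{B}_{\mathrm R}$ of triples covering, exactly once, every pair whose circular distance $d$ lies in $\{3,4,\ldots,(n-1)/2\}$; that is, a triangle decomposition of the circulant graph $\mathrm{Circ}(n;\{3,4,\ldots,(n-1)/2\})$. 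Since $n\equiv 5\adfmod{6}$, the integer $n$ is odd and $n\equiv 2\adfmod{3}$; in particular $3\nmid n$, a fact I use below.

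Next I would reduce this to a numerical partition problem. Writing $n=6t+5$, the set of long distances is $D=\{3,4,\ldots,3t+2\}$, of size $3t$. A triple $\{0,x,x+y\}\subset\mathbb{Z}_n$ with $x,y>0$ and $x+y<n$ has trivial stabiliser under translation — the only triples with a nontrivial stabiliser are cosets of an order-$3$ subgroup, which do not exist here because $3\nmid n$ — so its orbit has full length $n$ and covers, exactly once, all $n$ pairs at each of the distances $x$, $y$ and $\min\{x+y,\,n-x-y\}$. Hence it suffices to partition $D$ into $t$ triples $\{x,y,z\}$ with $x<y<z$, each being either a \emph{sum triple} with $x+y=z$, developed from the base triple $\{0,x,z\}$, or a \emph{difference triple} with $x+y+z=n$, developed from $\{0,x,x+y\}$ (where $z\le(n-1)/2$ forces the third distance to equal $n-x-y=z$). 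This produces exactly $|D|/3=(n-5)/6=|\mathcal{B}_{\mathrm R}|$ remainder blocks. For $n=11$ one has $D=\{3,4,5\}$ with $3+4\neq5$ and $3+4+5\neq11$, so the construction fails — in agreement with the stronger nonexistence statement of Theorem~\ref{thm:2-regular-nonexistence}.

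Then I would produce, for every $t\ge 2$ (the case $t=0$, $n=5$, being vacuous), the required partition of $\{3,\ldots,3t+2\}$ into $t$ sum or difference triples. This is a shifted form of the classical Skolem/Heffter difference problem. A parity count shows that the number of difference triples used is congruent modulo $2$ to $\sum_{d\in D}d$, a quantity determined by $t\bmod 4$ (even for $t\equiv 0,1\adfmod{4}$ and odd for $t\equiv 2,3\adfmod{4}$), so the construction splits naturally into four residue classes of $t$ modulo $4$; in each I would write down an explicit family of $t$ triples modelled on the standard Skolem partitions but rescaled to land in $\{3,\ldots,3t+2\}$ instead of the usual interval $\{1,\ldots,3t\}$, and I would dispose of the finitely many small $t$ not covered by the general families by hand or by a short exhaustive search.

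The step I expect to be hardest is exactly this last one: assembling uniform explicit triple families valid for all large $t$ in each residue class modulo $4$ while keeping every element inside the interval $\{3,\ldots,3t+2\}$ — this rescaling, which is precisely why the relevant modulus is $6t+5$ rather than $6t+1$, is the delicate point — together with the bookkeeping for the boundary values of $t$. Once the reduction to the difference problem is in place, everything else is routine.
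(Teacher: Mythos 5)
Your reduction is exactly the paper's: take the vertex set to be $\mathbb{Z}_n$, let the neighbourhood blocks be the consecutive triples generated by $\{0,1,n-1\}$, and obtain the remainder blocks by developing base triples $\{0,x,z\}$ whose three differences partition $\{3,4,\dots,3t+2\}$, with a difference triple $x+y+z=n$ playing the role of the ``hooked'' case in which $3t+2$ is replaced by the equivalent difference $3t+3$. The one step you leave open --- producing the explicit Skolem-type families in each residue class of $t$ modulo $4$ --- is precisely the step the paper does not redo either: it cites Lemma 2.1 of Forbes (2021), resting on Bermond--Brouwer--Germa and Simpson, for $t\ge 5$, and disposes of $n\in\{5,17,23,29\}$ by explicit base blocks, so your plan is sound and would be closed by the same citation rather than by reconstructing the partitions.
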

\begin{proof}
By Theorem~\ref{thm:2-regular-nonexistence}, there is no design for $C_{11}$.

Designs on point set $\{0, 1, \dots, n-1\}$ for $C_n$, $n \in \{5, 17, 23, 29\}$ are generated from
base blocks by $x \mapsto x + 1 \mathrm{~mod~} n$,
as tabulated below.
(Here and in the rest of the paper, the expression $a \mathrm{~mod~} b$ denotes
the integer in \{0, 1, \dots, $b - 1$\} that is congruent to $a$ modulo $b$.)

{\centering\begin{tabular}{ll}
{\boldmath $C_{ 5}$}: &
\{0,1,4\} \\
{\boldmath $C_{17}$}:  &
\{0,1,16\}, \{0,4,10\}, \{0,3,8\} \\
{\boldmath $C_{23}$}:  &
\{0,1,22\}, \{0,3,11\}, \{0,4,9\}, \{0,6,13\} \\
{\boldmath $C_{29}$}:  &
\{0,1,28\}, \{0,3,10\}, \{0,6,15\}, \{0,5,16\}, \{0,4,12\}
\end{tabular}}

Now suppose $m \ge 5$ and $n = 6m+5$.
By Lemma 2.1 of \cite{Forbes2020P} (see also \cite{BermondBrouwerGermaAbe1978} and \cite{Simpson1983}),
there exists a set $T$ of $m$ triples of the form $(0, x, z)$ such that
\begin{align*}
&\bigcup\{\{x, z-x, z\}: (0, x, z) \in T\} \\
  & ~~~~~ = \left\{\begin{array}{ll}\{3, 4, \dots, 3m + 2\} & \textrm{~if~} m \equiv 0,1 \adfmod{4},\\
                                       \{3, 4, \dots, 3m + 1, 3m + 3\} & \textrm{~if~} m \equiv 2,3 \adfmod{4}.
\end{array}\right.
\end{align*}
The design on point set $\{0, 1, \dots, n - 1\}$ for the graph $C_{n}$ is generated by the set of triples
$$ \{\{0, 1, n-1\}\} \cup T $$
under the action of the mapping $x \mapsto x + 1 \mathrm{~mod~} n$.

To see this, first observe that in the triple $\{0, 1, n-1\}$ the difference 2 occurs once,
but the difference 1 occurs twice, corresponding to the edges of the cycle graph,
which are generated from $\{0,1\}$ by $x \mapsto x + 1 \mathrm{~mod~} n$.
Thus $\mathcal{B}_\textrm{N}$ is generated by $\{0,1,n-1\}$.

The remaining differences \{3, 4, \dots, $3m + 2$\} occur precisely once each in $T$, and
therefore $T$ generates $\mathcal{B}_\textrm{R}$.
Note that differences $3m + 3$ and $3m + 2$ are the same.
\end{proof}

\begin{lemma}\label{lem:designs-2-regular-n-3-mod-6-direct}
Designs exist for cycle graphs $C_n$, $n \in \{15, 21, 27, 33, 39\}$.
\end{lemma}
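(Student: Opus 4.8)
The plan is to handle the five orders individually, exploiting cyclic symmetry as far as it will go. For $n\equiv3\adfmod{6}$ a design for $C_n$ on $\{0,1,\dots,n-1\}$ consists of the $n$ neighbourhood blocks $N[i]=\{i-1,i,i+1\}$ (indices modulo $n$), which realise difference $1$ twice and difference $2$ once, together with $n(n-5)/6$ remainder blocks whose internal differences must cover each of $3,4,\dots,(n-1)/2$ exactly $n$ times; so the task reduces to producing the remainder blocks.

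Since $3\mid n$, the count $n(n-5)/6$ is not a multiple of $n$, so a full cyclic development by $x\mapsto x+1\adfmod{n}$ cannot work as it does for $n\equiv5\adfmod{6}$ in Theorem~\ref{thm:designs-2-regular-n-5-mod-6}. Instead I would develop everything over the index-$3$ subgroup $\langle 3\rangle\le\mathbb{Z}_n$, which has order $n/3$. Then $N[0]$, $N[1]$, $N[2]$ generate all $n$ neighbourhood blocks, and it remains to choose $(n-5)/2$ remainder base blocks whose $\langle 3\rangle$-orbits cover every remaining pair exactly once. Counting differences, this means the $3(n-5)/2$ internal differences of the remainder base blocks must realise each of $3,4,\dots,(n-1)/2$ three times over, distributed across the three residue classes modulo $3$ of the corresponding endpoints. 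For each of $n\in\{15,21,27,33,39\}$ this is a finite search of very modest size; once suitable base blocks are in hand, the design is confirmed by the routine check that every pair of points is covered with its prescribed multiplicity.

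The chief obstacle is that, with $3\mid n$, there is no convenient difference-family lemma to invoke as there was for $n\equiv5\adfmod{6}$: each order needs its own list of remainder base blocks, found by hand or by a brief computer search, and the residue-balancing condition must be met. As the five values of $n$ are small, this causes no real difficulty, and the explicit block lists can simply be recorded and verified. These designs then serve as base cases for the recursive construction of designs for $2$-regular graphs of general admissible order.
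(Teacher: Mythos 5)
Your plan is essentially the paper's proof: for each of the five orders the paper simply exhibits base blocks on $\{0,1,\dots,n-1\}$ and develops them under $x\mapsto x+s\adfmod{n}$, exactly the subgroup-development you describe, with the blocks found by a small search and then verified pair by pair. Two caveats. First, as written your argument is only a plan; for an existence lemma proved by explicit construction, the base-block lists \emph{are} the proof, and you have not produced them, so the lemma is not yet established until the (admittedly routine) search is actually carried out and recorded. Second, your specific template --- develop over $\langle 3\rangle$ and use exactly $(n-5)/2$ remainder base blocks, each generating a full orbit of length $n/3$ --- is more rigid than what the paper ends up using: the paper takes $s=5$ rather than $s=3$ for $C_{15}$, and its solutions for $C_{15}$ and $C_{27}$ include base blocks representing short orbits (e.g.\ $\{0,5,10\}$ and $\{0,9,18\}$). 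So you should not commit in advance to full $\langle 3\rangle$-orbits only; allow short orbits and other step sizes $s\mid n$ in the search, since it is not clear a priori that a full-orbit $s=3$ solution exists for every one of the five orders. With that flexibility the approach is sound and matches the paper's.
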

\begin{proof}
Designs on point set $\{0, 1, \dots, n-1\}$ for the stated graphs are generated from
base blocks by $x \mapsto x + s \mathrm{~mod~} n$, as follows:

{\raggedright
{\boldmath $C_{15}$}: $s=5$, \\
 $\{0, 1, 14\}$, $\{0, 1, 2\}$, $\{1, 2, 3\}$, $\{2, 3, 4\}$, $\{3, 4, 5\}$,
 $\{0, 4, 8\}$, $\{1, 4, 9\}$, $\{2, 7, 14\}$, $\{0, 3, 9\}$, $\{0, 7, 11\}$,
 $\{1, 6, 13\}$, $\{2, 8, 13\}$, $\{0, 6, 12\}$, $\{0, 5, 10\}$;

{\boldmath $C_{21}$}: $s=3$, \\
 $\{0, 1, 20\}$, $\{0, 1, 2\}$, $\{1, 2, 3\}$, $\{0, 10, 13\}$, $\{1, 5, 16\}$,
 $\{0, 4, 16\}$, $\{1, 14, 17\}$, $\{0, 5, 11\}$, $\{0, 3, 9\}$, $\{0, 8, 17\}$,
 $\{0, 7, 14\}$;

{\boldmath $C_{27}$}: $s=3$, \\
 $\{0, 1, 26\}$, $\{0, 1, 2\}$, $\{1, 2, 3\}$, $\{0, 4, 15\}$, $\{0, 5, 21\}$,
 $\{0, 3, 10\}$, $\{0, 14, 22\}$, $\{0, 17, 20\}$, $\{0, 13, 19\}$, $\{0, 8, 23\}$,
 $\{1, 4, 17\}$, $\{1, 5, 11\}$, $\{1, 8, 13\}$, $\{0, 9, 18\}$, $\{1, 10, 19\}$,
 $\{2, 11, 20\}$;

{\boldmath $C_{33}$}: $s=3$, \\
 $\{0, 1, 32\}$, $\{0, 1, 2\}$, $\{1, 2, 3\}$, $\{0, 4, 10\}$, $\{0, 7, 25\}$,
 $\{0, 19, 28\}$, $\{1, 5, 22\}$, $\{0, 13, 16\}$, $\{1, 8, 14\}$, $\{1, 11, 26\}$,
 $\{1, 20, 29\}$, $\{0, 5, 17\}$, $\{0, 23, 26\}$, $\{0, 9, 29\}$, $\{0, 6, 14\}$,
 $\{0, 3, 15\}$, $\{0, 11, 22\}$;

{\boldmath $C_{39}$}: $s=3$,\\
 $\{0, 1, 38\}$, $\{0, 1, 2\}$, $\{1, 2, 3\}$, $\{0, 5, 11\}$, $\{0, 8, 29\}$,
 $\{0, 14, 23\}$, $\{0, 20, 32\}$, $\{1, 5, 20\}$, $\{1, 8, 11\}$, $\{0, 4, 35\}$,
 $\{0, 17, 22\}$, $\{1, 4, 29\}$, $\{1, 7, 23\}$, $\{0, 7, 19\}$, $\{0, 25, 34\}$,
 $\{0, 16, 31\}$, $\{0, 10, 28\}$, $\{0, 3, 12\}$, $\{0, 6, 21\}$, $\{0, 13, 26\}$.

} 

For $C_{15}$, the last base block represents a short orbit.
For $C_{27}$, the last three base blocks represent short orbits.
\end{proof}

\begin{theorem}\label{thm:designs-2-regular-n-3-5-mod-6}
For $n \equiv 3 \text{~or~} 5 \adfmod{6}$, $n \ge 5$,
there exists a $2$-regular $n$-vertex graph $G$ with girth at least $5$ for which there is a design of order $n$,
except for $n \in \{9, 11\}$.
\end{theorem}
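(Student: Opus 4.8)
\section*{Proof proposal}

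The plan is to split on $n \bmod 6$. When $n \equiv 5 \adfmod{6}$ there is nothing new to do: by Theorem~\ref{thm:designs-2-regular-n-5-mod-6} the cycle $C_n$ (which is $2$-regular of girth $n \ge 5$) carries a design for every such $n \ne 11$, and $n = 11$ is one of the two listed exceptions. The substance is the case $n \equiv 3 \adfmod{6}$, which I would prove by strong induction on $n$ over the admissible orders. The base values $n \in \{15, 21, 27, 33, 39\}$ are handled outright by Lemma~\ref{lem:designs-2-regular-n-3-mod-6-direct} (via $C_n$), and the omission of $n = 9$ is forced: the only $2$-regular graph of girth $\ge 5$ on $9$ vertices is $C_9$, for which Theorem~\ref{thm:2-regular-nonexistence} gives non-existence. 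Thus the inductive step only has to deal with $n \equiv 3 \adfmod{6}$, $n \ge 45$, assuming the result for all smaller admissible orders.

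For the step I would first record the gluing mechanism as a sub-lemma. Suppose $(V,\mathcal{G},\mathcal{B})$ is a $3$-GDD with $|V| = n$ in which every group $W \in \mathcal{G}$ has cardinality equal to the order of a design for some $2$-regular graph $G_W$ of girth at least $5$. Fix for each $W$ a bijection from the vertex set of such a design onto $W$, transport the design to $W$, and take the union of all these block sets together with $\mathcal{B}$. I claim this is a design of order $n$ for the vertex-disjoint union $\sum_W G_W$, which is again $2$-regular of girth $\ge 5$. Indeed, every block of a $3$-GDD meets each group in at most one point, so blocks of $\mathcal{B}$ carry only cross-group pairs (each exactly once, by definition of a GDD) and never a within-group pair, whereas the transported designs carry exactly the within-group pairs, once or twice according as the pair is a non-edge or an edge of the relevant $G_W$; all blocks have size $3 = \delta + 1$; and since $N_{\sum G_W}(i) = N_{G_W}(i)$ for $i \in W$, the neighbourhood blocks of the composite are precisely the neighbourhood blocks of the pieces. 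Finally, condition~(ii) (constant replication number) is automatic once the block size and all pair multiplicities are correct, as observed in Section~\ref{sec:The-basic-construction}. (For the size-$5$ groups I would use the order-$5$ design, namely the five neighbourhood blocks of $C_5$, this being the complete case $n = \delta^2 + 1$.)

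It then remains to produce, for every $n \equiv 3 \adfmod{6}$ with $n \ge 45$, a suitable $3$-GDD from Lemma~\ref{lem-GDD-existence}, split according to $n \bmod 18$. If $n \not\equiv 3 \adfmod{18}$, then $n/3$ is an integer that is $\equiv 3$ or $5 \adfmod{6}$ and satisfies $n/3 \ge 15$, hence (by the inductive hypothesis, or by Theorem~\ref{thm:designs-2-regular-n-5-mod-6} in the residue $5$ case) is the order of a design; apply the sub-lemma to a $3$-GDD of type $(n/3)^3$ from Lemma~\ref{lem-GDD-existence}(i), obtaining three disjoint copies of that graph. If $n = 57$, apply the sub-lemma to the $3$-GDD of type $5^8 17^1$ of Lemma~\ref{lem-GDD-existence}(iii), with a $C_5$-design on each size-$5$ group and a $C_{17}$-design on the size-$17$ group. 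If $n \equiv 3 \adfmod{18}$ and $n \ge 75$, choose $m \in \{15,21,27,33,39\}$ with $m \equiv n \adfmod{30}$ and put $t = (n-m)/30$; then $t \ge 2$ and one checks the elementary inequality $m \le 5(6t-1)$, so Lemma~\ref{lem-GDD-existence}(ii) yields a $3$-GDD of type $5^{6t} m^1$, and the sub-lemma applies with a $C_5$-design on each size-$5$ group and the Lemma~\ref{lem:designs-2-regular-n-3-mod-6-direct} design for $C_m$ on the last group.

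The counting in the sub-lemma and the inequality $m \le 5(6t-1)$ are routine. The delicate point, and the reason Lemma~\ref{lem-GDD-existence}(iii) appears at all, is the single value $n = 57$: here $57 \equiv 27 \adfmod{30}$, so a type $5^{6t} m^1$ construction would need $m = 27$, which exceeds $5\cdot 5$ and so is unavailable with $t = 1$, while $57$ is too small for $t \ge 2$; only the ad hoc $5^8 17^1$ design fills the gap. I expect the main thing to get right is the mod-$18$/mod-$30$ bookkeeping, ensuring that no admissible $n$ escapes all three cases (in particular that the first value $\equiv 3 \adfmod{18}$ with $n \ge 45$ is exactly $57$, after which the $5^{6t}m^1$ family takes over).
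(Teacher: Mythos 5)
Your proposal is correct and follows essentially the same route as the paper: both dispose of $n \equiv 5 \adfmod{6}$ by citing Theorem~\ref{thm:designs-2-regular-n-5-mod-6}, and both settle $n \equiv 3 \adfmod{6}$ by Wilson's fundamental construction, filling the groups of the $3$-GDDs of Lemma~\ref{lem-GDD-existence} with the small designs of Theorem~\ref{thm:designs-2-regular-n-5-mod-6} and Lemma~\ref{lem:designs-2-regular-n-3-mod-6-direct}, with the type $5^8\,17^1$ reserved for the awkward order $57$ in each case. The only difference is bookkeeping: where you induct on $n$ and use type $(n/3)^3$ whenever $n \not\equiv 3 \adfmod{18}$, the paper avoids induction by covering each residue class modulo $30$ directly with types $5^{6t}m^1$, $m \in \{15,21,27,33,39\}$, using $21^3$ and $23^3$ only for the two small orders $63$ and $69$; both case analyses are complete and rest on the same gluing mechanism.
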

\begin{proof}
By Theorem~\ref{thm:2-regular-nonexistence}, there is no design of order 9 or 11.
Therefore, by Theorem~\ref{thm:designs-2-regular-n-5-mod-6}, it suffices to deal with $n \equiv 3 \adfmod{6}$, $n \ge 15$.

We use Wilson's fundamental construction, \cite{WilsonRM1972}, \cite{GreigMullen2007}.
Generally, to construct a design of order $g u + m$, we take
a 3-GDD of type $g^u m^1$ and overlay the groups of sizes $g$ and $m$ with designs for $C_g$ and $C_m$.

The details are set out in Table~\ref{tab:constructions-2-regular}.
For the existence of designs for the ingredients specified in column 2 of Table~\ref{tab:constructions-2-regular}, see
Theorem~\ref{thm:designs-2-regular-n-5-mod-6} and Lemma~\ref{lem:designs-2-regular-n-3-mod-6-direct}.
See Lemma~\ref{lem-GDD-existence} for the existence of the specified group divisible designs.

The first five entries in Table~\ref{tab:constructions-2-regular} deal with all sufficiently large orders by residue class modulo 30.
The last three entries mop up those orders missed by the first five.

\begin{table}[h]
\begin{center}
\begin{tabular}{l|l|l}
Design orders constructed & Ingredients & 3-GDD type\\
\hline
{\boldmath\bf $30t +  3$, $t \ge 3$} & $C_{5}$, $C_{33}$ & $5^{6(t-1)} 33^1$ \rule{0mm}{4.5mm}\\ 
{\boldmath\bf $30t +  9$, $t \ge 3$} & $C_{5}$, $C_{39}$ & $5^{6(t-1)} 39^1$ \\                  
{\boldmath\bf $30t + 15$, $t \ge 1$} & $C_{5}$, $C_{15}$ & $5^{6t} 15^1$ \\                      
{\boldmath\bf $30t + 21$, $t \ge 1$} & $C_{5}$, $C_{21}$ & $5^{6t} 21^1$ \\                      
{\boldmath\bf $30t + 27$, $t \ge 2$} & $C_{5}$, $C_{27}$ & $5^{6t} 27^1$ \\[1mm]                      
{\boldmath\bf 57}                    & $C_{5}$, $C_{17}$ & $5^8 17^1$ \\
{\boldmath\bf 63}                    & $C_{21}$          & $21^3$ \\
{\boldmath\bf 69}                    & $C_{23}$          & $23^3$
\end{tabular}
\end{center}
\caption{Constructions for 2-regular graphs}
\label{tab:constructions-2-regular}
\end{table}

Hence there exists a design of order $n$ for all $n \equiv 3 \adfmod{6}$, $n \ge 15$.
\end{proof}

Observe that the 2-regular graphs corresponding to designs constructed using 3-GDDs are not connected.

\section{3-regular graphs}\label{sec:3-regular-graphs}

For {$\delta = 3$}, the block size is 4, and the admissibility conditions reduce to
$n \ge 10$, $n \equiv 4 \adfmod{6}$.
The main result of this section is that a design of order $n$ exists for each admissible $n \ge 40$.

\begin{theorem}\label{thm:designs-3-regular-direct}
For $n \in \{10\} \cup \{40, 46, 52, \dots, 202\}$,
there exists a connected $3$-regular $n$-vertex graph with girth at least $5$ for which there is a design of order $n$.
\end{theorem}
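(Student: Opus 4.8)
The statement asks for a single connected cubic graph of girth $\ge 5$ on $n$ vertices, together with a design, for each $n$ in the finite set $\{10\}\cup\{40,46,52,\dots,202\}$. This is a finite list (one value $n=10$ plus $28$ values from $40$ to $202$ in steps of $6$), so the natural plan is a computer-assisted search: for each target $n$, first exhibit a suitable graph, then construct the remainder blocks $\mathcal{B}_\mathrm{R}$ that complete the neighbourhood blocks $\mathcal{B}_\mathrm{N}$ into a design. By the analysis in Section~\ref{sec:The-basic-construction}, once a cubic graph $G$ of girth $\ge 5$ is fixed, conditions (i) and (ii) there are automatic, $\mathcal{B}_\mathrm{N}=\{N[i]:i\in V\}$ is forced, and what remains is exactly to partition the $n(n-\delta^2-1)/(\delta(\delta+1)) = n(n-10)/12$ still-uncovered pairs (those neither edges of $G$ nor inside a common neighbourhood block) into $4$-element blocks, each point lying in $(n-10)/3$ of them by Lemma~\ref{lem-n-delta2-1}. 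So the task reduces to: build the "leftover" graph $H$ on $V$ whose edges are the uncovered pairs, and decompose $H$ into $K_4$'s (equivalently, find a resolvable-type $K_4$-decomposition of a specified graph). For $n=10$ the design is complete with $\mathcal{B}_\mathrm{R}=\emptyset$: take $G$ to be the Petersen graph, as already noted in the text.

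For the bulk of the range I would proceed as follows. First, choose the graphs: for each admissible $n\equiv 4\pmod 6$ in $[40,202]$, select a connected cubic graph of girth $\ge 5$ — for instance a suitable circulant or a cyclically generated cubic graph on $\mathbb{Z}_n$, which makes the subsequent search easier because the automorphism $x\mapsto x+1$ (or $x\mapsto x+s$) can be imposed on $\mathcal{B}_\mathrm{R}$ as well, cutting the search space to orbit representatives. Then run an exhaustive or randomised backtracking search for the remainder blocks: maintain the list of uncovered pairs, repeatedly pick an uncovered pair, try every way of extending it to a $4$-set all of whose $\binom 42=6$ pairs are currently uncovered, recurse, and backtrack on failure. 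Cyclic symmetry reduces this to finding $n(n-10)/(12)\cdot(4/n)=(n-10)/3$ base blocks per... more precisely to finding a set of base blocks whose difference multiset hits each required difference the right number of times, which is the standard "difference family" reformulation and is fast in practice. I would present the outcome as an explicit table, in the same style as Lemma~\ref{lem:designs-2-regular-n-3-mod-6-direct} and Theorem~\ref{thm:designs-2-regular-n-5-mod-6}: for each $n$, the generating element $s$, the cubic graph (given by its base edges), and the base remainder blocks, with short orbits flagged where they occur. Connectedness and girth $\ge 5$ of each listed graph are then routine finite checks.

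The main obstacle is not any single deep step but the combination of (a) guaranteeing that, for \emph{every} one of these $n$, there is \emph{some} connected cubic girth-$5$ graph whose leftover graph admits the required $K_4$-decomposition — a priori the basic construction can fail, exactly as Theorem~\ref{thm:2-regular-nonexistence} shows it can for small $\delta=2$ orders — and (b) the search cost growing with $n$; near $n=200$ the uncovered graph is dense and the backtracking tree can be large without good symmetry or good variable-ordering heuristics. I would mitigate (a) by trying several candidate cubic graphs per order (different connection sets $s$, or switching a few edges) until one works, and mitigate (b) by always branching on the uncovered pair lying in the fewest completable $4$-sets and by exploiting cyclic symmetry as above; if a fully exhaustive run is infeasible for the largest orders, a successful randomised search still yields an explicit certificate, which is all the theorem requires. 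The honest fallback, should some isolated $n$ resist a cyclic construction, is to allow an ad hoc graph and a direct (non-cyclic) block list for that $n$ — the statement only asserts existence of a design for \emph{a} connected cubic girth-$5$ graph, so we are free to tailor the graph to the order.
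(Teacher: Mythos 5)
Your proposal is correct and follows essentially the same route as the paper: for each order the authors fix a concrete connected cubic girth-$5$ graph with a cyclic symmetry (the generalized Petersen graph $\mathrm{GP}(20,4)$ for $n=40$ and $\mathrm{GP}(n/2,3)$ otherwise), impose the automorphism $x\mapsto x+s \bmod n$ with $s\in\{2,4\}$ on the whole block set, and exhibit explicit base-block lists (short orbits flagged) found by computer search, exactly the certificate format you describe. The only quibble is that a genuine cubic \emph{circulant} always contains the $4$-cycle $0,\,a,\,a+n/2,\,n/2$ and so cannot have girth $5$, but your hedge towards general cyclically generated cubic graphs, and your fallback of trying several candidate graphs per order, matches what the paper actually does.
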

\begin{proof}
For $n = 10$, the design consists of the closed neighbourhoods of the Petersen graph.

For the others, we assume the point set is $\{0, 1, \dots, n\}$.
The blocks of the design are generated from lists of base blocks by $x \mapsto x + s \mathrm{~mod~} n$,
where $s \in \{2, 4\}$.
Base blocks that represent short orbits are shown at the ends of the lists.
There are two short orbits if $n \equiv 8 \adfmod{16}$, four if $n \equiv 0 \adfmod{16}$.

Order 40 uses the generalized Petersen graph $\mathrm{GP}(20,4)$.
Orders greater than 40 use $\mathrm{GP}(n/2,3)$.
There is nothing special about them---a random 3-regular graph with girth at least 5 often works.
For completeness, we include order 10.

{\raggedright
{\bf Order 10}: $s = 2$,
 $\{0, 1, 2, 8\}$, $\{1, 0, 5, 7\}$.

{\bf Order 40}: $s = 4$, \\
 $\{0, 1, 2, 38\}$, $\{0, 1, 9, 33\}$, $\{0, 2, 3, 4\}$, $\{2, 3, 11, 35\}$, $\{1, 3, 14, 30\}$, $\{0, 14, 22, 29\}$, $\{0, 6, 18, 23\}$, $\{1, 6, 27, 29\}$, $\{0, 17, 21, 35\}$, $\{0, 11, 15, 24\}$, $\{0, 7, 25, 28\}$, $\{1, 7, 10, 35\}$, $\{0, 5, 26, 32\}$, $\{0, 10, 20, 30\}$, $\{1, 11, 21, 31\}$.

{\bf Order 46}: $s = 2$, \\
 $\{0, 1, 2, 44\}$, $\{0, 1, 7, 41\}$, $\{0, 5, 28, 38\}$, $\{0, 6, 22, 33\}$, $\{0, 9, 12, 26\}$, $\{0, 21, 25, 35\}$, $\{0, 15, 31, 39\}$, $\{0, 17, 19, 37\}$.

{\bf Order 52}: $s = 4$, \\
 $\{0, 1, 2, 50\}$, $\{0, 1, 7, 47\}$, $\{0, 2, 3, 4\}$, $\{2, 3, 9, 49\}$, $\{0, 5, 36, 42\}$, $\{0, 8, 17, 32\}$, $\{0, 10, 15, 40\}$, $\{0, 11, 18, 34\}$, $\{0, 30, 39, 43\}$, $\{0, 13, 26, 38\}$, $\{0, 14, 31, 46\}$, $\{1, 15, 18, 42\}$, $\{1, 11, 22, 30\}$, $\{1, 3, 25, 34\}$, $\{0, 19, 35, 49\}$, $\{0, 25, 29, 45\}$, $\{0, 23, 33, 41\}$, $\{1, 19, 27, 51\}$.

{\bf Order 58}: $s = 2$, \\
 $\{0, 1, 2, 56\}$, $\{0, 1, 7, 53\}$, $\{0, 5, 42, 52\}$, $\{0, 8, 17, 34\}$, $\{0, 12, 25, 40\}$, $\{0, 19, 35, 38\}$, $\{0, 15, 22, 49\}$, $\{0, 29, 33, 47\}$, $\{0, 14, 37, 45\}$, $\{1, 3, 23, 33\}$.

{\bf Order 64}: $s = 4$, \\
 $\{0, 1, 2, 62\}$, $\{0, 1, 7, 59\}$, $\{0, 2, 3, 4\}$, $\{2, 3, 9, 61\}$, $\{0, 5, 14, 38\}$, $\{0, 6, 40, 50\}$, $\{0, 8, 26, 54\}$, $\{0, 9, 11, 44\}$, $\{0, 12, 25, 33\}$, $\{0, 15, 17, 36\}$, $\{0, 19, 34, 49\}$, $\{0, 39, 42, 53\}$, $\{0, 22, 47, 57\}$, $\{0, 27, 35, 58\}$, $\{0, 23, 41, 61\}$, $\{0, 37, 51, 55\}$, $\{1, 14, 23, 43\}$, $\{1, 26, 31, 38\}$, $\{2, 10, 23, 47\}$, $\{1, 11, 22, 39\}$, $\{1, 5, 29, 46\}$, $\{0, 16, 32, 48\}$, $\{1, 17, 33, 49\}$, $\{2, 18, 34, 50\}$, $\{3, 19, 35, 51\}$.

} 

The others are in Appendix~\ref{app:3-regular-direct}.
\end{proof}

The limit $n = 202$ in Theorem~\ref{thm:designs-3-regular-direct} has no significance.
With sufficient patience and processing power one can take the computations considerably further.

\begin{theorem}\label{lem:designs-3-regular}
For $n \equiv 4 \adfmod{6}$, $n \ge 10$,
there exists a $3$-regular $n$-vertex graph $G$ with girth at least $5$ for which there is a design of order $n$,
except for $n \in \{16, 22\}$ and except possibly for $n \in \{28, 34\}$.
\end{theorem}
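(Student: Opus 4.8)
The plan is to mirror the strategy used for $2$-regular graphs in Theorem~\ref{thm:designs-2-regular-n-3-5-mod-6}, combining the direct constructions of Theorem~\ref{thm:designs-3-regular-direct} with a recursive Wilson-type construction based on $4$-GDDs. First I would dispose of the small orders: Theorem~\ref{thm:designs-3-regular-direct} already supplies designs for $n=10$ and for every $n\equiv 4\adfmod 6$ with $40\le n\le 202$; the nonexistence of designs for $C\!$-type graphs of orders $16$ and $22$ must be established separately (presumably by an exhaustive search analogous to Theorem~\ref{thm:2-regular-nonexistence}, or the paper defers it), and orders $28,34$ are left as possible exceptions. So after these remarks the real content is: produce a design of order $n$ for each admissible $n\ge 208$, i.e.\ $n\equiv 4\adfmod 6$, $n\ge 208$.

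For the recursive step I would use Wilson's fundamental construction exactly as in the $2$-regular case: to build a design of order $gu+m$, take a $4$-GDD of type $g^u m^1$ and overlay the group of size $g$ (respectively $m$) with a design for a $3$-regular graph on $g$ (respectively $m$) vertices with girth at least $5$. The natural choice is $g=10$, since Lemma~\ref{lem-GDD-existence}(iv) gives a $4$-GDD of type $10^{3t}m^1$ whenever $t\ge 2$, $m\equiv 1\adfmod 3$, $m\le 5(3t-1)$; the resulting design has order $30t+m$. Because every admissible $n$ satisfies $n\equiv 4\adfmod 6$, and $10\equiv 4\adfmod 6$, the "ingredient" designs on $10$ points and on $m$ points will themselves have to be of admissible order, which forces $m\equiv 4\adfmod 6$ and $m\equiv 1\adfmod 3$, i.e.\ $m\equiv 4\adfmod 6$ — consistent with the small direct constructions. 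The point-set bookkeeping is identical to Table~\ref{tab:constructions-2-regular}: partition the residues modulo $30$ (or a suitable modulus) into a handful of classes, in each class write $n=30t+m$ with $m$ a fixed small admissible order for which Theorem~\ref{thm:designs-3-regular-direct} supplies a design, check the inequality $m\le 5(3t-1)$ holds once $t$ is large enough, and then mop up the finitely many residue-class representatives that fall below that threshold using the explicit direct constructions (orders up to $202$) together with Lemma~\ref{lem-GDD-existence}(v), the $4$-GDD of type $10^4$, for order $40$ if needed.

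The main obstacle I anticipate is the boundary-value arithmetic: the constraint $m\le 5(3t-1)$ means that for each residue class modulo $30$ we can only start the recursion at some explicit $t_0$, so every order $n=30t+m$ with $t<t_0$ must already be covered by the direct list $\{40,46,\dots,202\}$ — and one has to verify that the six residue classes $4,10,16,22,28,34\pmod{30}$ among the admissible residues each have all their sub-threshold members in that list, which is why the direct computations were pushed all the way to $202$ rather than stopping earlier. A secondary subtlety is that overlaying a design for a $3$-regular graph on each group genuinely yields a regular-graph design for the \emph{disjoint union} of those graphs (which is again $3$-regular with girth $\ge 5$, since girth is preserved under disjoint union and the GDD blocks, being transversal-like across groups, create no short cycles through the neighbourhood blocks): one should note, as the authors do for the $2$-regular case, that the graphs produced this way are disconnected, so the theorem is stated for "some $3$-regular $n$-vertex graph," not for a connected one. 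Assembling these pieces into a table analogous to Table~\ref{tab:constructions-2-regular} completes the proof.
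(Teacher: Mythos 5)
Your proposal follows essentially the same route as the paper: exhaustive computer search over all girth-$\ge 5$ cubic graphs to rule out orders 16 and 22, then Wilson's fundamental construction with 4-GDDs of type $10^{3t}m^1$ (and $10^4$) filled with the direct designs of Theorem~\ref{thm:designs-3-regular-direct}, organised by residue class modulo 30 with the sub-threshold orders mopped up from the direct list extending to 202. The only quibble is that the admissible residues modulo 30 form five classes, $4,10,16,22,28$ (your ``34'' is just $4$ again); the paper takes ingredient orders $64,10,46,52,58$ respectively for these classes.
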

\begin{proof}
Using data provided by Meringer \cite{MeringerReggraphs1999} (see also \cite{Meringer1999}),
we tested completely
all forty-nine 3-regular 16-vertex graphs with girth at least 5 and
all 90940      3-regular 22-vertex graphs with girth at least 5 (including the two non-connected ones).
No designs were found.
We therefore claim that there is  no design for any of these graphs.
In Appendix~\ref{app:algorithms} we outline the methods employed.

For orders not covered by Theorem~\ref{thm:designs-3-regular-direct}, we use Wilson's fundamental construction, \cite{WilsonRM1972}, \cite{GreigMullen2007},
to construct designs---as described in the proof of Theorem~\ref{thm:designs-2-regular-n-3-5-mod-6}.
The ingredients for the constructions, namely designs of orders 10, 46, 52, 58 and 64,
exist by Theorem~\ref{thm:designs-3-regular-direct}.
The relevant 4-GDDs exist by Lemma~\ref{lem-GDD-existence}.
Therefore designs of the following orders exist:\\
{\boldmath\bf $30t +  4$, $t \ge 7$} using 4-GDD type $10^{3(t - 2)} 64^1$ (misses 94, 124, 154, 184); \\
{\boldmath\bf $30t + 10$, $t \ge 1$} using 4-GDD type $10^{3t} 10^1$; \\
{\boldmath\bf $30t + 16$, $t \ge 5$} using 4-GDD type $10^{3(t - 1)} 46^1$ (misses 76, 106, 136); \\
{\boldmath\bf $30t + 22$, $t \ge 5$} using 4-GDD type $10^{3(t - 1)} 52^1$ (misses 82, 112, 142); \\
{\boldmath\bf $30t + 28$, $t \ge 6$} using 4-GDD type $10^{3(t - 1)} 58^1$ (misses 88, 118, 148, 178).

The orders missed are provided by Theorem~\ref{thm:designs-3-regular-direct}.
\end{proof}

The 3-regular graphs corresponding to designs constructed using 4-GDDs are not connected.
We have been unable to create a design for a 3-regular graph with 28 or 34 vertices and girth least 5.

\section{(4 or more)-regular graphs}\label{sec:4-or-more-regular-graphs}

We enjoyed limited success at finding new designs when the vertex degree is greater than three.
We know of only four connected $\delta$-regular graphs with $\delta \ge 4$ and girth at least 5
such that the closed neighbourhoods of the vertices can be extended to a regular-graph design with $\lambda = 1$.
They are
the Sylvester graph ($\delta = 5$, used to illustrate the basic construction in Section~\ref{sec:The-basic-construction}),
the Hoffman--Singleton graph ($\delta = 7$, $\mathcal{B}_{\mathrm{R}} = \{\}$),
and the two designs described below.

The design for a 4-regular graph on $n$ vertices $\{0, 1, \dots, n - 1\}$
is generated from base blocks by the mapping $x \mapsto x + s \textrm{~mod~} {n}$.
The first three generate the neighbourhood blocks from which the graph can be recovered.

{\raggedright
{\bf Order 105}: $s = 3$, \\
 $\{0, 53, 88, 98, 100\}$, $\{1, 6, 18, 53, 68\}$, $\{2, 9, 40, 54, 55\}$, $\{87, 83, 57, 26, 20\}$, $\{39, 79, 62, 3, 42\}$, $\{0, 2, 51, 99, 103\}$, $\{0, 5, 18, 27, 34\}$, $\{0, 13, 24, 43, 97\}$, $\{0, 10, 17, 33, 104\}$, $\{0, 14, 64, 80, 90\}$, $\{0, 11, 32, 55, 86\}$, $\{0, 49, 58, 77, 85\}$, $\{0, 28, 41, 61, 67\}$, $\{1, 19, 43, 65, 92\}$, $\{0, 22, 25, 65, 70\}$, $\{1, 2, 26, 35, 38\}$, $\{0, 21, 42, 63, 84\}$.
The last block represents a short orbit.
The graph is connected and has girth 6.

{\bf Order 117}: $s = 3$, \\
 $\{0, 11, 18, 68, 99\}$, $\{1, 43, 76, 101, 107\}$, $\{2, 13, 19, 51, 108\}$, $\{13, 105, 30, 89, 50\}$, $\{7, 86, 14, 42, 51\}$, $\{56, 74, 108, 110, 82\}$, $\{65, 12, 60, 57, 38\}$, $\{0, 61, 104, 113, 116\}$, $\{0, 7, 47, 76, 77\}$, $\{1, 17, 40, 67, 113\}$, $\{0, 14, 46, 56, 107\}$, $\{0, 23, 38, 58, 71\}$, $\{0, 41, 55, 66, 109\}$, $\{0, 40, 52, 70, 74\}$, $\{0, 13, 15, 32, 34\}$, $\{0, 6, 16, 84, 96\}$, $\{0, 1, 4, 24, 54\}$, $\{0, 31, 88, 103, 112\}$.
The graph is connected and has girth 6.

} 

From these one can construct designs of orders
$420t + 105$ for $t \ge 1$,
$2340t + 117$ for $t \ge 1$ and
$2340t + 585$ for $t \ge 0$
by filling in the groups of the 5-GDDs of Lemma~\ref{lem-GDD-existence} item (vi).
The corresponding graphs are not connected.

We claim there is no design for any of the 4131991 4-regular graphs of 25 vertices and girth 5.

\section*{Acknowledgement}

We would like to thank Dr Markus Meringer for making available
the program {\sc Genreg} and the edge sets of regular graphs with girth at least 5, \cite{Meringer1999,MeringerReggraphs1999}.

\section*{ORCID}

\noindent A. D. Forbes     \url{https://orcid.org/0000-0003-3805-7056} \\
C. G. Rutherford           \url{https://orcid.org/0000-0003-1924-207X}


\appendix

\section{Designs for Theorem~\ref{thm:designs-3-regular-direct}}\label{app:3-regular-direct}

Blocks for the stated design order $n$ are generated from the list of base blocks by $x \mapsto x + s \mathrm{~mod~} n$.
There are two short orbits if $n \equiv 8 \adfmod{16}$, four if $n \equiv 0 \adfmod{16}$.

{\raggedright

{\bf Order 70}: $s = 2$, \\
 $\{0, 1, 2, 68\}$, $\{0, 1, 7, 65\}$, $\{0, 5, 54, 64\}$, $\{0, 8, 17, 46\}$, $\{0, 12, 25, 42\}$, $\{0, 15, 26, 49\}$, $\{0, 14, 33, 48\}$, $\{0, 20, 57, 67\}$, $\{0, 18, 45, 61\}$, $\{0, 29, 31, 51\}$, $\{1, 9, 27, 41\}$, $\{0, 35, 39, 63\}$.

{\bf Order 76}: $s = 4$, \\
 $\{0, 1, 2, 74\}$, $\{0, 1, 7, 71\}$, $\{0, 2, 3, 4\}$, $\{2, 3, 9, 73\}$, $\{0, 5, 14, 30\}$, $\{0, 6, 48, 58\}$, $\{0, 8, 26, 62\}$, $\{0, 9, 22, 66\}$, $\{0, 11, 13, 46\}$, $\{0, 15, 42, 70\}$, $\{0, 12, 29, 50\}$, $\{0, 19, 32, 56\}$, $\{0, 21, 23, 40\}$, $\{0, 16, 41, 51\}$, $\{0, 27, 43, 67\}$, $\{0, 33, 47, 73\}$, $\{0, 37, 55, 69\}$, $\{0, 45, 49, 65\}$, $\{0, 31, 53, 61\}$, $\{1, 18, 31, 49\}$, $\{1, 25, 54, 62\}$, $\{1, 23, 42, 67\}$, $\{1, 27, 35, 66\}$, $\{1, 39, 43, 50\}$, $\{2, 11, 14, 31\}$, $\{2, 7, 22, 55\}$.

{\bf Order 82}: $s = 2$, \\
 $\{0, 1, 2, 80\}$, $\{0, 1, 7, 77\}$, $\{0, 5, 66, 76\}$, $\{0, 8, 17, 58\}$, $\{0, 12, 25, 54\}$, $\{0, 14, 34, 60\}$, $\{0, 15, 38, 57\}$, $\{0, 23, 30, 61\}$, $\{0, 18, 51, 65\}$, $\{0, 29, 39, 55\}$, $\{0, 27, 45, 73\}$, $\{0, 49, 71, 79\}$, $\{0, 43, 63, 67\}$, $\{0, 35, 37, 69\}$.

{\bf Order 88}: $s = 4$, \\
 $\{0, 1, 2, 86\}$, $\{0, 1, 7, 83\}$, $\{0, 2, 3, 4\}$, $\{2, 3, 9, 85\}$, $\{55, 23, 80, 68\}$, $\{42, 32, 55, 3\}$, $\{25, 43, 74, 86\}$, $\{42, 23, 14, 60\}$, $\{55, 8, 65, 82\}$, $\{0, 6, 27, 35\}$, $\{0, 5, 15, 39\}$, $\{0, 9, 67, 71\}$, $\{1, 3, 5, 75\}$, $\{1, 15, 22, 55\}$, $\{0, 14, 19, 79\}$, $\{1, 10, 27, 47\}$, $\{1, 26, 51, 66\}$, $\{1, 31, 42, 78\}$, $\{0, 11, 46, 61\}$, $\{0, 13, 55, 58\}$, $\{0, 18, 26, 82\}$, $\{0, 30, 50, 81\}$, $\{0, 25, 38, 54\}$, $\{0, 16, 45, 78\}$, $\{0, 34, 53, 69\}$, $\{0, 17, 37, 77\}$, $\{0, 21, 24, 73\}$, $\{0, 8, 36, 56\}$, $\{0, 33, 41, 65\}$, $\{0, 22, 44, 66\}$, $\{1, 23, 45, 67\}$.

{\bf Order 94}: $s = 2$, \\
 $\{0, 1, 2, 92\}$, $\{0, 1, 7, 89\}$, $\{0, 5, 78, 88\}$, $\{0, 8, 17, 70\}$, $\{0, 12, 25, 56\}$, $\{0, 14, 29, 72\}$, $\{0, 18, 37, 66\}$, $\{0, 20, 43, 60\}$, $\{0, 27, 30, 69\}$, $\{0, 31, 52, 87\}$, $\{0, 26, 71, 75\}$, $\{0, 53, 55, 85\}$, $\{0, 33, 61, 79\}$, $\{0, 59, 67, 81\}$, $\{0, 47, 57, 83\}$, $\{1, 17, 41, 61\}$.

{\bf Order 100}: $s = 4$, \\
 $\{0, 1, 2, 98\}$, $\{0, 1, 7, 95\}$, $\{0, 2, 3, 4\}$, $\{2, 3, 9, 97\}$, $\{61, 19, 84, 52\}$, $\{8, 80, 59, 46\}$, $\{0, 5, 80, 86\}$, $\{0, 8, 18, 64\}$, $\{0, 11, 24, 37\}$, $\{0, 12, 26, 52\}$, $\{0, 15, 17, 84\}$, $\{0, 19, 22, 65\}$, $\{0, 21, 23, 85\}$, $\{0, 29, 39, 89\}$, $\{0, 30, 42, 53\}$, $\{0, 27, 34, 81\}$, $\{0, 41, 50, 55\}$, $\{0, 45, 49, 69\}$, $\{0, 58, 75, 93\}$, $\{0, 59, 63, 97\}$, $\{0, 57, 73, 94\}$, $\{0, 47, 62, 78\}$, $\{0, 43, 70, 90\}$, $\{0, 61, 83, 91\}$, $\{0, 46, 71, 82\}$, $\{1, 18, 33, 62\}$, $\{1, 14, 46, 73\}$, $\{1, 34, 43, 79\}$, $\{1, 9, 27, 57\}$, $\{1, 26, 50, 87\}$, $\{1, 35, 70, 91\}$, $\{2, 10, 43, 62\}$, $\{3, 19, 43, 71\}$, $\{2, 30, 59, 79\}$.

{\bf Order 106}: $s = 2$, \\
 $\{0, 1, 2, 104\}$, $\{0, 1, 7, 101\}$, $\{78, 103, 24, 6\}$, $\{0, 5, 60, 100\}$, $\{0, 8, 17, 90\}$, $\{0, 10, 22, 78\}$, $\{0, 13, 70, 85\}$, $\{0, 19, 44, 86\}$, $\{0, 21, 23, 58\}$, $\{0, 27, 74, 103\}$, $\{0, 14, 45, 87\}$, $\{0, 26, 61, 83\}$, $\{0, 30, 77, 93\}$, $\{0, 43, 53, 89\}$, $\{0, 41, 65, 91\}$, $\{0, 67, 95, 99\}$, $\{1, 9, 49, 63\}$, $\{0, 37, 55, 75\}$.

{\bf Order 112}: $s = 4$, \\
 $\{0, 1, 2, 110\}$, $\{0, 1, 7, 107\}$, $\{0, 2, 3, 4\}$, $\{2, 3, 9, 109\}$, $\{31, 88, 81, 23\}$, $\{66, 98, 51, 88\}$, $\{110, 43, 76, 16\}$, $\{13, 100, 21, 8\}$, $\{23, 0, 85, 106\}$, $\{3, 63, 33, 94\}$, $\{59, 42, 25, 28\}$, $\{83, 109, 45, 16\}$, $\{43, 65, 78, 91\}$, $\{91, 100, 42, 6\}$, $\{110, 81, 21, 32\}$, $\{25, 56, 64, 39\}$, $\{47, 32, 8, 49\}$, $\{0, 12, 38, 71\}$, $\{0, 6, 32, 96\}$, $\{0, 9, 19, 68\}$, $\{0, 21, 61, 76\}$, $\{0, 37, 62, 102\}$, $\{0, 30, 42, 72\}$, $\{0, 58, 77, 99\}$, $\{0, 50, 66, 74\}$, $\{0, 43, 46, 83\}$, $\{0, 45, 89, 91\}$, $\{0, 35, 51, 69\}$, $\{0, 11, 65, 98\}$, $\{2, 71, 75, 95\}$, $\{2, 11, 55, 91\}$, $\{1, 46, 98, 103\}$, $\{1, 19, 33, 70\}$, $\{1, 25, 67, 74\}$, $\{1, 5, 75, 86\}$, $\{1, 17, 58, 93\}$, $\{1, 10, 54, 102\}$, $\{0, 28, 56, 84\}$, $\{1, 29, 57, 85\}$, $\{2, 30, 58, 86\}$, $\{3, 31, 59, 87\}$.

{\bf Order 118}: $s = 2$, \\
 $\{0, 1, 2, 116\}$, $\{0, 1, 7, 113\}$, $\{53, 109, 72, 2\}$, $\{68, 25, 33, 4\}$, $\{37, 59, 69, 111\}$, $\{0, 5, 63, 112\}$, $\{0, 8, 25, 53\}$, $\{0, 9, 91, 108\}$, $\{0, 12, 27, 67\}$, $\{0, 13, 61, 85\}$, $\{0, 14, 47, 73\}$, $\{0, 18, 95, 115\}$, $\{1, 17, 51, 81\}$, $\{0, 39, 43, 57\}$, $\{0, 16, 103, 105\}$, $\{0, 28, 68, 109\}$, $\{0, 31, 38, 84\}$, $\{0, 22, 58, 93\}$, $\{0, 23, 62, 92\}$, $\{0, 20, 44, 86\}$.

{\bf Order 124}: $s = 4$, \\
 $\{0, 1, 2, 122\}$, $\{0, 1, 7, 119\}$, $\{0, 2, 3, 4\}$, $\{2, 3, 9, 121\}$, $\{122, 108, 31, 17\}$, $\{81, 121, 6, 47\}$, $\{20, 95, 35, 55\}$, $\{43, 70, 110, 38\}$, $\{63, 74, 86, 15\}$, $\{93, 91, 8, 67\}$, $\{0, 5, 23, 39\}$, $\{0, 6, 19, 87\}$, $\{0, 8, 71, 103\}$, $\{0, 9, 51, 55\}$, $\{0, 11, 18, 107\}$, $\{0, 29, 31, 111\}$, $\{0, 10, 27, 79\}$, $\{1, 5, 27, 63\}$, $\{0, 13, 91, 99\}$, $\{0, 21, 54, 115\}$, $\{0, 22, 37, 67\}$, $\{0, 34, 43, 94\}$, $\{1, 9, 75, 90\}$, $\{1, 11, 14, 114\}$, $\{1, 18, 86, 111\}$, $\{1, 70, 78, 107\}$, $\{1, 55, 74, 94\}$, $\{1, 33, 71, 81\}$, $\{1, 26, 54, 103\}$, $\{0, 12, 38, 86\}$, $\{0, 16, 62, 106\}$, $\{0, 30, 89, 118\}$, $\{0, 61, 82, 98\}$, $\{0, 24, 66, 76\}$, $\{0, 50, 64, 117\}$, $\{0, 20, 78, 101\}$, $\{1, 21, 62, 89\}$, $\{0, 25, 70, 92\}$, $\{0, 44, 93, 109\}$, $\{0, 69, 97, 121\}$, $\{0, 17, 36, 77\}$, $\{0, 28, 73, 84\}$.

{\bf Order 130}: $s = 2$, \\
 $\{0, 1, 2, 128\}$, $\{0, 1, 7, 125\}$, $\{22, 44, 103, 90\}$, $\{0, 93, 70, 29\}$, $\{49, 102, 35, 64\}$, $\{120, 103, 61, 99\}$, $\{0, 5, 45, 124\}$, $\{0, 8, 18, 32\}$, $\{0, 9, 96, 111\}$, $\{0, 12, 28, 54\}$, $\{0, 17, 40, 61\}$, $\{0, 19, 58, 110\}$, $\{0, 25, 56, 83\}$, $\{0, 30, 66, 97\}$, $\{0, 33, 44, 85\}$, $\{0, 37, 50, 105\}$, $\{0, 47, 79, 82\}$, $\{0, 35, 53, 69\}$, $\{0, 57, 65, 87\}$, $\{0, 49, 103, 123\}$, $\{0, 73, 75, 121\}$, $\{1, 11, 37, 61\}$.

{\bf Order 136}: $s = 4$, \\
 $\{0, 1, 2, 134\}$, $\{0, 1, 7, 131\}$, $\{0, 2, 3, 4\}$, $\{2, 3, 9, 133\}$, $\{79, 28, 83, 94\}$, $\{41, 24, 114, 85\}$, $\{88, 17, 93, 48\}$, $\{64, 126, 13, 0\}$, $\{92, 128, 107, 38\}$, $\{85, 106, 63, 65\}$, $\{31, 83, 122, 109\}$, $\{37, 90, 55, 74\}$, $\{93, 55, 64, 101\}$, $\{48, 70, 97, 111\}$, $\{67, 37, 7, 54\}$, $\{0, 6, 21, 93\}$, $\{0, 8, 33, 81\}$, $\{0, 10, 69, 121\}$, $\{0, 9, 16, 117\}$, $\{0, 11, 77, 109\}$, $\{0, 23, 97, 113\}$, $\{0, 39, 53, 133\}$, $\{1, 5, 41, 86\}$, $\{1, 3, 10, 113\}$, $\{0, 26, 57, 107\}$, $\{0, 18, 48, 89\}$, $\{0, 43, 71, 125\}$, $\{1, 23, 67, 87\}$, $\{1, 58, 70, 75\}$, $\{1, 79, 95, 119\}$, $\{1, 62, 94, 102\}$, $\{1, 98, 118, 127\}$, $\{1, 11, 90, 126\}$, $\{0, 19, 70, 119\}$, $\{0, 27, 59, 86\}$, $\{0, 30, 95, 118\}$, $\{0, 35, 83, 91\}$, $\{2, 30, 63, 94\}$, $\{0, 58, 111, 114\}$, $\{0, 31, 92, 130\}$, $\{0, 14, 74, 98\}$, $\{0, 28, 80, 122\}$, $\{0, 24, 78, 103\}$, $\{0, 47, 60, 110\}$, $\{0, 12, 32, 99\}$, $\{0, 34, 68, 102\}$, $\{1, 35, 69, 103\}$.

{\bf Order 142}: $s = 2$, \\
 $\{0, 1, 2, 140\}$, $\{0, 1, 7, 137\}$, $\{110, 96, 63, 41\}$, $\{8, 58, 2, 70\}$, $\{122, 141, 57, 23\}$, $\{112, 80, 53, 101\}$, $\{12, 60, 124, 23\}$, $\{10, 76, 37, 63\}$, $\{0, 5, 9, 102\}$, $\{0, 8, 24, 108\}$, $\{0, 10, 46, 98\}$, $\{0, 13, 70, 93\}$, $\{0, 17, 26, 55\}$, $\{0, 25, 28, 61\}$, $\{0, 18, 38, 69\}$, $\{0, 15, 59, 82\}$, $\{0, 22, 57, 123\}$, $\{0, 63, 71, 127\}$, $\{0, 97, 99, 113\}$, $\{0, 81, 91, 111\}$, $\{0, 47, 65, 117\}$, $\{0, 39, 89, 121\}$, $\{0, 67, 107, 135\}$, $\{0, 37, 79, 125\}$.

{\bf Order 148}: $s = 4$, \\
 $\{0, 1, 2, 146\}$, $\{0, 1, 7, 143\}$, $\{0, 2, 3, 4\}$, $\{2, 3, 9, 145\}$, $\{87, 2, 53, 129\}$, $\{85, 30, 10, 126\}$, $\{20, 93, 108, 97\}$, $\{21, 4, 82, 124\}$, $\{30, 90, 73, 71\}$, $\{88, 128, 65, 107\}$, $\{131, 83, 100, 1\}$, $\{130, 105, 19, 115\}$, $\{5, 125, 41, 73\}$, $\{59, 115, 93, 32\}$, $\{63, 146, 14, 24\}$, $\{129, 32, 46, 8\}$, $\{40, 126, 58, 3\}$, $\{47, 98, 117, 119\}$, $\{4, 19, 127, 134\}$, $\{35, 70, 44, 22\}$, $\{0, 5, 35, 95\}$, $\{0, 6, 11, 79\}$, $\{0, 8, 51, 115\}$, $\{0, 9, 23, 67\}$, $\{0, 10, 55, 87\}$, $\{0, 21, 47, 71\}$, $\{0, 29, 99, 103\}$, $\{1, 10, 99, 135\}$, $\{0, 22, 75, 91\}$, $\{1, 17, 103, 111\}$, $\{1, 19, 47, 129\}$, $\{0, 13, 16, 135\}$, $\{2, 14, 31, 122\}$, $\{0, 63, 94, 102\}$, $\{2, 11, 38, 139\}$, $\{1, 55, 58, 122\}$, $\{1, 34, 78, 139\}$, $\{0, 59, 81, 89\}$, $\{0, 69, 114, 129\}$, $\{0, 41, 48, 90\}$, $\{1, 38, 49, 105\}$, $\{1, 14, 70, 109\}$, $\{0, 58, 93, 117\}$, $\{0, 30, 53, 96\}$, $\{0, 34, 65, 112\}$, $\{0, 50, 74, 80\}$, $\{0, 20, 57, 92\}$, $\{0, 12, 66, 116\}$, $\{0, 25, 64, 110\}$, $\{0, 33, 62, 134\}$.

{\bf Order 154}: $s = 2$, \\
 $\{0, 1, 2, 152\}$, $\{0, 1, 7, 149\}$, $\{53, 114, 90, 128\}$, $\{67, 32, 86, 15\}$, $\{111, 113, 19, 68\}$, $\{72, 118, 137, 87\}$, $\{64, 148, 139, 70\}$, $\{13, 2, 36, 83\}$, $\{103, 69, 61, 6\}$, $\{52, 148, 65, 19\}$, $\{0, 5, 18, 27\}$, $\{0, 8, 30, 56\}$, $\{0, 10, 62, 74\}$, $\{0, 17, 112, 133\}$, $\{0, 23, 50, 82\}$, $\{0, 20, 60, 88\}$, $\{0, 29, 110, 147\}$, $\{0, 16, 57, 115\}$, $\{0, 36, 85, 103\}$, $\{0, 89, 109, 119\}$, $\{0, 51, 91, 139\}$, $\{0, 101, 125, 129\}$, $\{0, 33, 111, 143\}$, $\{0, 31, 87, 113\}$, $\{0, 39, 53, 107\}$, $\{0, 61, 77, 151\}$.

{\bf Order 160}: $s = 4$, \\
 $\{0, 1, 2, 158\}$, $\{0, 1, 7, 155\}$, $\{0, 2, 3, 4\}$, $\{2, 3, 9, 157\}$, $\{95, 103, 128, 71\}$, $\{129, 35, 71, 121\}$, $\{21, 89, 17, 78\}$, $\{23, 140, 14, 153\}$, $\{50, 107, 39, 110\}$, $\{24, 140, 106, 152\}$, $\{78, 11, 92, 157\}$, $\{14, 2, 84, 49\}$, $\{159, 117, 30, 53\}$, $\{92, 70, 100, 24\}$, $\{53, 103, 120, 2\}$, $\{131, 20, 82, 121\}$, $\{142, 78, 104, 152\}$, $\{32, 117, 96, 141\}$, $\{8, 34, 75, 140\}$, $\{102, 85, 33, 65\}$, $\{136, 42, 35, 70\}$, $\{53, 0, 139, 22\}$, $\{66, 19, 128, 85\}$, $\{0, 5, 18, 102\}$, $\{0, 6, 30, 118\}$, $\{0, 14, 19, 154\}$, $\{1, 3, 30, 146\}$, $\{0, 11, 50, 106\}$, $\{1, 17, 42, 50\}$, $\{0, 58, 74, 87\}$, $\{0, 15, 70, 122\}$, $\{1, 10, 37, 102\}$, $\{0, 25, 110, 142\}$, $\{1, 39, 54, 90\}$, $\{0, 10, 55, 71\}$, $\{1, 27, 46, 115\}$, $\{2, 23, 79, 99\}$, $\{1, 19, 29, 106\}$, $\{1, 71, 94, 131\}$, $\{2, 35, 83, 87\}$, $\{0, 9, 63, 91\}$, $\{0, 23, 41, 52\}$, $\{0, 49, 83, 147\}$, $\{0, 27, 29, 36\}$, $\{0, 31, 75, 145\}$, $\{0, 57, 115, 141\}$, $\{0, 35, 77, 133\}$, $\{0, 24, 123, 137\}$, $\{0, 97, 119, 157\}$, $\{0, 39, 73, 121\}$, $\{0, 16, 72, 105\}$, $\{0, 17, 61, 140\}$, $\{0, 47, 69, 100\}$, $\{0, 40, 80, 120\}$, $\{1, 41, 81, 121\}$, $\{2, 42, 82, 122\}$, $\{3, 43, 83, 123\}$.

{\bf Order 166}: $s = 2$, \\
 $\{0, 1, 2, 164\}$, $\{0, 1, 7, 161\}$, $\{30, 144, 13, 114\}$, $\{54, 66, 147, 120\}$, $\{97, 39, 10, 31\}$, $\{69, 17, 59, 50\}$, $\{23, 142, 92, 68\}$, $\{5, 59, 153, 36\}$, $\{127, 156, 161, 51\}$, $\{164, 117, 40, 100\}$, $\{151, 22, 110, 67\}$, $\{111, 150, 143, 164\}$, $\{0, 6, 16, 138\}$, $\{0, 11, 62, 70\}$, $\{0, 13, 15, 80\}$, $\{0, 18, 43, 146\}$, $\{0, 31, 56, 109\}$, $\{0, 46, 95, 118\}$, $\{0, 26, 131, 134\}$, $\{0, 40, 125, 151\}$, $\{0, 57, 76, 155\}$, $\{0, 59, 68, 139\}$, $\{0, 22, 55, 91\}$, $\{0, 36, 75, 103\}$, $\{0, 51, 73, 89\}$, $\{0, 37, 83, 153\}$, $\{1, 5, 25, 65\}$, $\{1, 15, 45, 119\}$.

{\bf Order 172}: $s = 4$, \\
 $\{0, 1, 2, 170\}$, $\{0, 1, 7, 167\}$, $\{0, 2, 3, 4\}$, $\{2, 3, 9, 169\}$, $\{110, 163, 11, 70\}$, $\{84, 22, 119, 81\}$, $\{65, 101, 152, 134\}$, $\{61, 32, 136, 76\}$, $\{48, 117, 94, 106\}$, $\{142, 1, 108, 134\}$, $\{1, 54, 44, 17\}$, $\{57, 157, 80, 171\}$, $\{141, 37, 114, 162\}$, $\{129, 48, 65, 91\}$, $\{129, 171, 109, 13\}$, $\{144, 104, 91, 46\}$, $\{55, 131, 123, 59\}$, $\{48, 102, 31, 53\}$, $\{4, 146, 115, 139\}$, $\{83, 139, 33, 81\}$, $\{110, 130, 119, 88\}$, $\{102, 36, 163, 14\}$, $\{96, 85, 12, 31\}$, $\{29, 58, 94, 123\}$, $\{171, 117, 108, 87\}$, $\{143, 107, 94, 5\}$, $\{149, 37, 123, 158\}$, $\{59, 152, 75, 41\}$, $\{0, 6, 25, 49\}$, $\{0, 8, 113, 141\}$, $\{0, 11, 53, 57\}$, $\{0, 16, 109, 153\}$, $\{0, 27, 37, 89\}$, $\{0, 13, 21, 101\}$, $\{1, 14, 141, 158\}$, $\{0, 15, 33, 165\}$, $\{0, 23, 62, 125\}$, $\{1, 26, 67, 86\}$, $\{1, 31, 83, 98\}$, $\{1, 62, 94, 99\}$, $\{1, 79, 82, 106\}$, $\{1, 42, 75, 118\}$, $\{1, 58, 74, 138\}$, $\{0, 45, 55, 146\}$, $\{0, 41, 96, 163\}$, $\{1, 47, 91, 171\}$, $\{2, 59, 87, 119\}$, $\{0, 14, 83, 123\}$, $\{0, 20, 59, 100\}$, $\{0, 50, 64, 139\}$, $\{0, 24, 71, 144\}$, $\{0, 12, 98, 115\}$, $\{0, 30, 82, 124\}$, $\{0, 18, 118, 143\}$, $\{0, 32, 122, 166\}$, $\{0, 36, 87, 138\}$, $\{0, 70, 126, 147\}$, $\{0, 38, 106, 116\}$.

{\bf Order 178}: $s = 2$, \\
 $\{0, 1, 2, 176\}$, $\{0, 1, 7, 173\}$, $\{27, 155, 22, 102\}$, $\{173, 73, 139, 154\}$, $\{76, 142, 152, 168\}$, $\{122, 94, 48, 83\}$, $\{85, 149, 26, 177\}$, $\{143, 126, 3, 33\}$, $\{38, 13, 108, 17\}$, $\{144, 20, 34, 164\}$, $\{171, 142, 101, 25\}$, $\{77, 40, 37, 166\}$, $\{0, 6, 30, 38\}$, $\{0, 9, 11, 88\}$, $\{0, 13, 120, 135\}$, $\{0, 18, 60, 96\}$, $\{0, 27, 114, 145\}$, $\{0, 21, 39, 106\}$, $\{0, 12, 56, 79\}$, $\{0, 25, 128, 161\}$, $\{0, 45, 94, 141\}$, $\{0, 81, 107, 165\}$, $\{0, 22, 95, 143\}$, $\{0, 40, 109, 117\}$, $\{0, 51, 65, 116\}$, $\{0, 57, 119, 155\}$, $\{0, 125, 147, 171\}$, $\{0, 41, 115, 131\}$, $\{0, 105, 149, 159\}$, $\{0, 43, 63, 169\}$.

{\bf Order 184}: $s = 4$, \\
 $\{0, 1, 2, 182\}$, $\{0, 1, 7, 179\}$, $\{0, 2, 3, 4\}$, $\{2, 3, 9, 181\}$, $\{5, 99, 86, 130\}$, $\{93, 177, 102, 49\}$, $\{122, 88, 98, 174\}$, $\{180, 155, 18, 92\}$, $\{75, 146, 129, 95\}$, $\{35, 53, 139, 135\}$, $\{182, 166, 99, 51\}$, $\{40, 29, 125, 144\}$, $\{183, 68, 132, 141\}$, $\{72, 64, 10, 154\}$, $\{45, 125, 32, 165\}$, $\{172, 7, 59, 135\}$, $\{87, 15, 64, 57\}$, $\{141, 52, 19, 136\}$, $\{74, 18, 60, 105\}$, $\{38, 129, 131, 35\}$, $\{148, 139, 71, 5\}$, $\{95, 70, 152, 10\}$, $\{132, 46, 181, 143\}$, $\{8, 36, 111, 147\}$, $\{3, 5, 165, 38\}$, $\{179, 153, 77, 145\}$, $\{179, 70, 80, 100\}$, $\{67, 95, 34, 153\}$, $\{98, 30, 126, 175\}$, $\{0, 112, 149, 145\}$, $\{56, 71, 133, 111\}$, $\{0, 6, 131, 163\}$, $\{0, 18, 35, 59\}$, $\{0, 17, 87, 95\}$, $\{0, 26, 47, 167\}$, $\{0, 27, 38, 43\}$, $\{0, 25, 83, 143\}$, $\{0, 31, 50, 171\}$, $\{0, 66, 81, 155\}$, $\{0, 54, 62, 119\}$, $\{0, 91, 106, 117\}$, $\{1, 15, 38, 149\}$, $\{1, 26, 49, 155\}$, $\{0, 78, 113, 123\}$, $\{1, 86, 115, 122\}$, $\{1, 19, 29, 166\}$, $\{0, 39, 118, 150\}$, $\{1, 46, 55, 158\}$, $\{1, 39, 53, 113\}$, $\{1, 21, 42, 90\}$, $\{0, 57, 70, 134\}$, $\{0, 58, 101, 162\}$, $\{0, 74, 129, 158\}$, $\{1, 33, 134, 146\}$, $\{0, 52, 146, 166\}$, $\{0, 21, 126, 140\}$, $\{0, 30, 36, 97\}$, $\{0, 29, 60, 108\}$, $\{0, 53, 56, 68\}$, $\{0, 32, 141, 157\}$, $\{0, 16, 137, 160\}$, $\{0, 46, 92, 138\}$, $\{1, 47, 93, 139\}$.

{\bf Order 190}: $s = 2$, \\
 $\{0, 1, 2, 188\}$, $\{0, 1, 7, 185\}$, $\{84, 113, 151, 162\}$, $\{96, 187, 20, 141\}$, $\{66, 38, 25, 155\}$, $\{20, 88, 163, 148\}$, $\{4, 123, 23, 39\}$, $\{99, 33, 126, 109\}$, $\{163, 133, 48, 105\}$, $\{171, 103, 146, 21\}$, $\{168, 120, 179, 126\}$, $\{44, 14, 148, 94\}$, $\{179, 157, 143, 93\}$, $\{93, 117, 76, 21\}$, $\{0, 5, 175, 182\}$, $\{0, 9, 101, 178\}$, $\{0, 10, 33, 181\}$, $\{0, 14, 51, 125\}$, $\{0, 16, 99, 169\}$, $\{0, 18, 61, 105\}$, $\{0, 55, 73, 118\}$, $\{0, 47, 79, 81\}$, $\{0, 109, 161, 187\}$, $\{1, 9, 57, 111\}$, $\{0, 27, 31, 84\}$, $\{0, 26, 95, 157\}$, $\{0, 20, 123, 158\}$, $\{0, 44, 93, 144\}$, $\{0, 22, 88, 151\}$, $\{0, 36, 107, 132\}$, $\{0, 24, 64, 98\}$, $\{0, 38, 77, 108\}$.

{\bf Order 196}: $s = 4$, \\
 $\{0, 1, 2, 194\}$, $\{0, 1, 7, 191\}$, $\{0, 2, 3, 4\}$, $\{2, 3, 9, 193\}$, $\{35, 43, 135, 71\}$, $\{133, 183, 179, 31\}$, $\{119, 62, 90, 70\}$, $\{26, 20, 98, 192\}$, $\{171, 41, 178, 81\}$, $\{142, 131, 8, 148\}$, $\{87, 5, 111, 42\}$, $\{66, 15, 185, 150\}$, $\{80, 180, 121, 26\}$, $\{154, 81, 125, 170\}$, $\{102, 37, 146, 53\}$, $\{110, 165, 183, 185\}$, $\{90, 158, 38, 136\}$, $\{129, 9, 78, 4\}$, $\{134, 156, 31, 29\}$, $\{17, 47, 89, 125\}$, $\{166, 178, 59, 116\}$, $\{46, 179, 163, 182\}$, $\{16, 142, 103, 48\}$, $\{2, 84, 141, 7\}$, $\{171, 33, 88, 43\}$, $\{54, 64, 22, 85\}$, $\{77, 191, 86, 30\}$, $\{18, 122, 108, 139\}$, $\{152, 104, 37, 139\}$, $\{112, 124, 188, 72\}$, $\{144, 39, 25, 193\}$, $\{88, 67, 58, 96\}$, $\{6, 112, 59, 46\}$, $\{58, 167, 43, 20\}$, $\{126, 101, 15, 4\}$, $\{193, 60, 29, 0\}$, $\{0, 13, 54, 170\}$, $\{0, 9, 70, 178\}$, $\{0, 10, 34, 53\}$, $\{0, 18, 33, 66\}$, $\{0, 15, 46, 82\}$, $\{0, 19, 118, 182\}$, $\{1, 5, 18, 118\}$, $\{0, 43, 63, 86\}$, $\{1, 22, 87, 137\}$, $\{0, 58, 95, 169\}$, $\{0, 115, 162, 185\}$, $\{2, 23, 103, 143\}$, $\{0, 17, 146, 187\}$, $\{0, 28, 67, 138\}$, $\{1, 67, 99, 126\}$, $\{1, 54, 79, 167\}$, $\{0, 26, 37, 107\}$, $\{0, 99, 117, 155\}$, $\{0, 44, 103, 149\}$, $\{0, 47, 101, 159\}$, $\{0, 27, 92, 171\}$, $\{1, 35, 49, 57\}$, $\{0, 75, 113, 135\}$, $\{0, 16, 127, 161\}$, $\{1, 55, 65, 117\}$, $\{0, 51, 84, 173\}$, $\{0, 36, 109, 124\}$, $\{0, 25, 121, 128\}$, $\{0, 20, 65, 177\}$, $\{0, 61, 85, 153\}$.

{\bf Order 202}: $s = 2$, \\
 $\{0, 1, 2, 200\}$, $\{0, 1, 7, 197\}$, $\{10, 42, 101, 189\}$, $\{191, 163, 28, 134\}$, $\{106, 112, 81, 189\}$, $\{161, 175, 68, 21\}$, $\{21, 51, 96, 182\}$, $\{51, 131, 16, 140\}$, $\{190, 16, 67, 27\}$, $\{28, 188, 73, 95\}$, $\{171, 6, 148, 11\}$, $\{106, 98, 142, 60\}$, $\{149, 39, 175, 125\}$, $\{49, 128, 153, 11\}$, $\{125, 81, 166, 116\}$, $\{53, 154, 137, 201\}$, $\{0, 12, 33, 43\}$, $\{0, 10, 27, 73\}$, $\{0, 13, 81, 181\}$, $\{0, 14, 69, 151\}$, $\{0, 15, 89, 121\}$, $\{0, 19, 37, 149\}$, $\{0, 139, 191, 195\}$, $\{0, 53, 129, 144\}$, $\{0, 34, 131, 133\}$, $\{0, 153, 169, 189\}$, $\{0, 61, 119, 138\}$, $\{0, 95, 103, 173\}$, $\{0, 18, 84, 159\}$, $\{0, 49, 52, 108\}$, $\{0, 30, 70, 175\}$, $\{0, 16, 90, 114\}$, $\{0, 20, 68, 130\}$, $\{0, 22, 76, 102\}$.

} 

\section{Algorithms}\label{app:algorithms}

For all forty-nine 3-regular graphs with 16 vertices and girth at least 5, \cite{MeringerReggraphs1999},
we found that Algorithm A is sufficient to prove that no designs exist.
Alternatively, we reached the same conclusion with Algorithm C.

For 3-regular graphs with 22 vertices and girth at least 5, \cite{MeringerReggraphs1999},
we found that Algorithm A establishes non-existence of a design for most but not all of the 90940 graphs. 
So we settled non-existence for all of the graphs in two different ways: Algorithms B and D.
The second iteration of Algorithm D is needed for some graphs. 

Algorithm C suffices to prove non-existence of the designs for
all of the 4131991 4-regular graphs with 25 vertices and girth 5,
as created by {\sc Genreg}, \cite{MeringerReggraphs1999}.

\subsection*{Algorithm A}~ 

Consider a $\delta$-regular graph $G$ with $n$ vertices and girth at least 5.
Assume the graph's vertices are $\{1, 2, \dots, n\}$.

Construct the neighbourhood blocks $\mathcal{B}_{\mathrm{N}}$
and then the set of remainder pairs,
$$\mathcal{P}_{\mathrm{R}} = \{\{i,j\}: 1 \le i < j \le n,~ \{i,j\} \not\subset B \text{ for any } B \in \mathcal{B}_{\mathrm{N}}\}.$$
For each ($\delta + 1$)-tuple
$$q \in Q = \{\{a_1, a_2, \dots, a_{\delta + 1}\}: 1 \le a_1 < a_2 < \dots < a_{\delta + 1} \le n\},$$
compute
$$K(q) = \{\{i,j\}: \{i,j\} \subset q \text{ and } \{i,j\} \in \mathcal{P}_{\mathrm{R}}\}.$$
Discard those $K(q)$ with fewer than $\delta(\delta + 1)/2$ elements to leave
$$\mathcal{K} = \{K(q): q \in Q,~ |K(q)| = \delta(\delta + 1)/2\}.$$
If $|\bigcup \mathcal{K}| < |\mathcal{P}_{\mathrm{R}}|$,
we cannot construct the blocks of $\mathcal{B}_{\mathrm{R}}$ from $\mathcal{K}$, and therefore a design for $G$ does not exist.

\subsection*{Algorithm B}~ 

Consider a $\delta$-regular graph with vertices $\{1, 2, \dots, n\}$ and girth at least 5.
Construct $\mathcal{P}_{\mathrm{R}}$ and $\mathcal{K}$ as in Algorithm A.
If $|\bigcup \mathcal{K}| < |\mathcal{P}_{\mathrm{R}}|$, the design does not exist.
Otherwise continue as follows.

Create the
$|\mathcal{P}_{\mathrm{R}}| \times |\mathcal{K}|$ matrix $M$ with
rows indexed by $\mathcal{P}_{\mathrm{R}}$,
columns indexed by $\mathcal{K}$, and
entries defined by
$M_{t,k} = 1$ if the pair in $\mathcal{P}_{\mathrm{R}}$ indexed by $t$ is a member of the element of $\mathcal{K}$ indexed by $k$,
$M_{t,k} = 0$ otherwise.
Let $\bold{j}$ be the all-ones vector of dimension $|\mathcal{P}_{\mathrm{R}}|$.
If the design exists, then there must exist a $\{0,1\}$ vector $\bold{d}$ of dimension $|\mathcal{K}|$
that satisfies $M \bold{d} = \bold{j}$.

\subsection*{Algorithm C}~ 

Consider a $\delta$-regular graph with vertices $\{1, 2, \dots, (\delta + 1)^2\}$ and girth at least 5.
Construct $\mathcal{P}_{\mathrm{R}}$ as in Algorithm A and observe that
$|\mathcal{P}_{\mathrm{R}}| = \delta (\delta + 1)^2.$
Let
$$T = \{j: \{1,j\} \in \mathcal{P}_{\mathrm{R}}\},$$
and note that $|T| = 2 \delta$ by Lemma~\ref{lem-n-delta2-1}.
Partition $T$ into two sets $\{T_1$, $T_2\}$ of $\delta$ elements each.
For $i \in \{1,2\}$, check if $\{1\} \cup T_i$ is a valid block;
that is, check that the $\delta(\delta - 1)/2$ pairs which occur as subsets of $T_i$ are elements of $\mathcal{P}_{\mathrm{R}}$.
If two valid blocks cannot be found for any of the $\displaystyle \frac12 {2\delta \choose \delta}$ ways of partitioning $T$, then a design does not exist.

\subsection*{Algorithm D}~ 

Consider a 3-regular graph with vertices $\{1, 2, \dots, 22\}$ and girth at least 5.
Construct $\mathcal{P}_{\mathrm{R}}$ as in Algorithm A.

Let $t = 1$, let
$$T = \{j: \{t,j\} \in \mathcal{P}_{\mathrm{R}}\},$$
and note that $|T| = 12$.
Partition $T$ into four triples, $T_1$, $T_2$, $T_3$, $T_4$.
For each $i \in \{1,2,3,4\}$, check if $\{t\} \cup T_i$ is a valid block.
If four valid blocks cannot be found for any of the
$$\dfrac{1}{4!}{12 \choose 3}{9 \choose 3}{6 \choose 3} = 15400$$
ways of partitioning $T$,
then a design does not exist.

Whenever four valid blocks are found perform a second iteration of the process with
$t$ replaced by the smallest element of $\{2,3, \dots, 22\} \setminus T$
and the pairs contained in $T_i$, $i \in \{1,2,3,4\}$, eliminated from $\mathcal{P}_{\mathrm{R}}$.


\adfhide
{
\section{Declarations}

%
There are no conflicts of interest.
%
%
No funds, grants, or other support were received during the preparation of this manuscript.
%
%
We do not analyse or generate any datasets.
}

\end{document}